\theoremstyle{plain}
\newtheorem{theorem}{Theorem}[section]
\newtheorem{lemma}[theorem]{Lemma}
\newtheorem{proposition}[theorem]{Proposition}
\newtheorem*{conjecture*}{Conjecture}
\newtheorem*{challenge*}{Open Problem}
\theoremstyle{definition}
\newtheorem{definition}[theorem]{Definition}
\theoremstyle{remark}
\newtheorem*{remark}{Remark}
\newtheorem*{remarks}{Remarks}
\numberwithin{equation}{section}
\newtheorem*{rmk*}{Remark}
\newcommand{\R}{\mathbb R}
\newcommand{\N}{\mathbb N}
\newcommand{\colvector}[2]{\left(\begin{smallmatrix} #1 \\ #2 \end{smallmatrix}\right)}
\newcommand{\Z}{\mathbb Z}
\newcommand{\C}{\mathbb C}
\newcommand{\Q}{{\mathbb Q}}
\newcommand{\vast}{\bBigg@{4}}
\newcommand{\bea}{\begin{eqnarray}}
\newcommand{\eea}{\end{eqnarray}}
\newcommand{\be}{\begin{equation}}
\newcommand{\ee}{\end{equation}}
\newcommand{\sign}{\operatorname{sgn}}
\newcommand{\benn}{\begin{equation*}}
\newcommand{\eenn}{\end{equation*}}
\def\GL{\rm GL}
\def\({\left(}
\def\){\right)}
\def\SL{{\rm SL}}
\def\k2{\frac{k}{2}}
\begin{document}
\thanks{The research of the first author is supported by the Alfried Krupp Prize for Young University Teachers of the Krupp foundation and the research leading to these results receives funding from the European Research Council under the European Union's Seventh Framework Programme (FP/2007-2013) / ERC Grant agreement n. 335220 - AQSER. The third author thanks the University of Cologne and the DFG for their generous support via the University of Cologne postdoc grant DFG Grant D-72133-G-403-151001011, funded under the Institutional Strategy of the University of Cologne within the German Excellence Initiative. }
\date{\today}

\title[A special family of Maass forms]{On some special families of $q$-hypergeometric Maass forms}
\author{Kathrin Bringmann}
\address{Mathematical Institute, University of Cologne, Weyertal 86-90, 50931 Cologne, Germany}
\email{kbringma@math.uni-koeln.de}
\author{Jeremy Lovejoy}
\address{CNRS LIAFA	Universite Denis Diderot - Paris 7, Case 7014, 75205 Paris Cedex 13, France}
\email{lovejoy@math.cnrs.fr}
\author{Larry Rolen}
\address{Hamilton Mathematics Institute \& School of Mathematics, Trinity College, Dublin 2, Ireland}
\email{lrolen@maths.tcd.ie}
\begin{abstract}
Using special polynomials 
related to the Andrews-Gordon identities and the colored Jones polynomial of torus knots, we construct classes of $q$-hypergeometric series lying in the Habiro ring. These give rise to new families of quantum modular forms, and their Fourier coefficients encode distinguished Maass cusp forms. The cuspidality of these Maass waveforms is proven by making use of the Habiro ring representations of the associated quantum modular forms. Thus, we provide an example of how the $q$-hypergeometric structure of the associated series to can be used to establish modularity properties which are otherwise non-obvious. We conclude the paper with a number of motivating questions and possible connections with Hecke characters, combinatorics, and still mysterious relations between $q$-hypergeometric series and the passage from positive to negative coefficients of Maass waveforms.
\end{abstract}
\maketitle
\section{Introduction and Statement of Results}\label{Intro}
We begin by introducing the special polynomials $H_n(k,\ell;b;q)$ which play a key role in our constructions. To do so, we recall the \emph{$q$-rising factorial}, defined by
\begin{equation*}
(a)_n = (a;q)_n := \prod_{k=0}^{n-1} \big(1-aq^{k}\big),
\end{equation*}
along with the \emph{Gaussian polynomials}, given by
\begin{equation*} 
\begin{bmatrix} n \\ k \end{bmatrix}_q  :=
\begin{cases}
\frac{(q)_n}{(q)_{n-k}(q)_k} & \text{if $0 \leq k \leq n$}, \\
0 & \text{otherwise}. \notag
\end{cases}
\end{equation*}
Then for $k\in\mathbb{N}$, $1 \leq \ell \leq k$, and $b\in\{0,1\}$, we define the polynomials $H_{n}(k,\ell;b;q)$ by
\begin{equation}\label{Hdef}
H_{n}(k,\ell;b;q) := \sum_{n = n_k \geq n_{k-1} \geq \ldots \geq n_1 \geq 0} \prod_{j=1}^{k-1} q^{n_j^2+(1-b)n_j} \begin{bmatrix} n_{j+1}-n_j - bj + \sum_{r=1}^j (2n_r + \chi_{\ell > r}) \\ n_{j+1}-n_j \end{bmatrix}_q.
\end{equation}
Here we use the usual charactersitic function $\chi_{A}$, defined to be $1$ if $A$ is true and $0$ otherwise.

These polynomials occurred explicitly (in the case $b=1$) in recent work on torus knots \cite{Hi-Lo1}, and they can also be related to generating functions for the partitions occurring in Gordon's generalization of the Rogers-Ramanujan identities \cite{Wa1}.  To describe the latter, let $G_{k,i,i',L}(q)$ be the generating function for partitions of the form
\begin{equation} \label{A-Grelation}
\sum_{j=1}^{L-1} j f_j,
\end{equation}
with $f_1 \leq i-1$, $f_{L-1} \leq i'-1$, and $f_i + f_{i+1} \leq k$ for $1 \leq k \leq L-2$.   Using the fact that
\begin{equation*}
\begin{bmatrix} n \\ k \end{bmatrix}_{q^{-1}} = q^{-k(n-k)}\begin{bmatrix} n \\ k \end{bmatrix}_{q},
\end{equation*}
making some judicious changes of variable and comparing with Theorem 5 of \cite{Wa1}, it can be shown that
\begin{equation} \label{A-Grelationbis}
H_n\big(k,\ell;b,q^{-1}\big) = q^{(k-1)bn - 2(k-1)\binom{n+1}{2}} G_{k-1,\ell,k,2n-b+1}(q).
\end{equation}

In the context of torus knots, the $n$-th coefficient in Habiro's cyclotomic expansion of the colored Jones polynomial of the left-handed torus knot $T(2,2k+1)$ was shown in \cite{Hi-Lo1} to be $q^{n+1-k}H_{n+1}(k,1;1;q)$, and the general $H_{n}(k,\ell;1;q)$ were used to construct a class of $q$-hypergeometric series with interesting behavior both at roots of unity and inside the unit circle. As we shall see shortly, this is the heart of the quantum modular phenomenon; the reader is also referred to \cite{Hi-Lo1} for more details.

In this paper, we consider classes of $q$-hypergeometric Maass cusp forms constructed from the polynomials $H_{n}(k,\ell;b;q)$.  These functions, denoted {by} $F_j(k,\ell;q)$  $\big(j\in\{1,2,3,4\}\big)$, are defined as follows:\footnote{Note that $F_2(k,\ell;q)$ has a convergence issue, which we overcome by averaging over the even and odd partial sums with respect to $n$.}
\begin{equation}\label{FFnsDefn}\begin{aligned}
F_1(k,\ell;q)  &:= \sum_{n \geq 0} (q)_{n}(-1)^{n}q^{\binom{n+1}{2}} {H}_{n}(k,\ell;0;q), \\ 
F_2(k,\ell;q)  &:= \sum_{n \geq 0} \big(q^2;q^2\big)_{n}(-1)^{n} {H}_{n}(k,\ell;0;q), \\
F_3(k,\ell;q)  &:= \sum_{\substack{n \geq 1}} (q)_{n-1}(-1)^{n}q^{\binom{n+1}{2}} H_{n}(k,\ell;1;q),  \\ 
F_4(k,\ell;q) &:= \sum_{\substack{n \geq 1}} (-1)_{n}(q)_{n-1}(-q)^{n} H_{n}(k,\ell;1;q).  
\end{aligned}
\end{equation}
Note that when $k=1$ the polynomials in \eqref{Hdef} are identically $1$, and so the above contain two celebrated $q$-series of Andrews, Dyson, and Hickerson \cite{An-Dy-Hi1} as special cases.
\noindent Namely, we have
 \begin{equation}\label{ConnectionOurFamilySigma}
 2F_2(1,1;q) = \sigma\big(q^2\big)
 \end{equation}
 and
 \begin{equation}\label{ConnectionOurFamilySigmaStar}
 F_4(1,1;q) = -\sigma^*(-q),
 \end{equation}
where
\begin{align}\label{sigmadef}
\sigma(q) :=& \sum_{n \geq 0} \frac{q^{\binom{n+1}{2}}}{(-q)_n} \\
		   =&\  1 + \sum_{n \geq 0} (-1)^nq^{n+1}(q)_n \label{id1}\\
		   =&\   2\sum_{n \geq 0}(-1)^n(q)_n , \label{id2}\\
\sigma^*(q) :=&\  2\sum_{n \geq 1} \frac{(-1)^nq^{n^2}}{(q;q^2)_n}  \label{sigma*def}\\
	=&\  -2\sum_{n \geq 0} q^{n+1}\big(q^2;q^2\big)_n. \label{id3}
\end{align}
The definitions in \eqref{sigmadef} and \eqref{sigma*def} are the original definitions of Andrews, Dyson, and Hickerson, while the identities \eqref{id1} and \eqref{id3} were established by Cohen \cite{Co1}, and \eqref{id2} follows easily.
 The function $\sigma$ was first considered in Ramanujan's
``Lost'' notebook (see \cite{AndrewsLostNotebookV}). Andrews, Dyson, and Hickerson showed \cite{An-Dy-Hi1} that this series satisfies several striking and beautiful properties, and in particular that if
$
\sigma(q)=\sum_{n\geq0}S(n)q^n
,$
then $\lim \sup |S(n)|=\infty$ but $S(n)=0$ for infinitely many $n$. Their proof is closely related to indefinite theta series representations of $\sigma$, such as:
\[
\sigma(q)=\sum_{\substack{n\geq0\\ |\nu|\leq n}}(-1)^{n+\nu}q^{\frac{n(3n+1)}2-\nu^2}\big(1-q^{2n+1}\big)
.
\]
The coefficients of $\sigma^*(q)$ have the same properties.

Subsequently Cohen \cite{Co1} showed how to nicely package the $q$-series of Andrews, Dyson, and Hickerson within a single modular object.
Namely, he proved that if coefficients $\{T(n)\}_{n\in1+24\Z}$ are defined by
\begin{equation} \label{Tofndef}
\sigma\big(q^{24}\big)=\sum_{n\geq0}T(n)q^{n-1}
,
\quad\quad\quad\quad
\sigma^*\big(q^{24}\big)=\sum_{n<0}T(n)q^{1-n},
\end{equation}
then the $T(n)$ are the Fourier coefficients of a Maass waveform. The definitions of Maass waveforms and the details of this construction are reviewed in Section \ref{MaassFormsSctn}. In this paper, we show that the functions $F_j(k,\ell;q)$ have a similar connection to Maass waveforms, and by \eqref{ConnectionOurFamilySigma} and \eqref{ConnectionOurFamilySigmaStar} may thus be considered as a $q$-hypergeometric framework containing the examples of Andrews, Dyson, and Hickerson and Cohen.\\
\indent
In what follows, we let $f$ be a Maass waveform with eigenvalue $1/4$ (under the hyperbolic Laplacian $\Delta$)  on a congruence subgroup of $\SL_2(\mathbb{Z})$ (and with a possible multiplier), which is cuspidal at $i\infty$. If the
Fourier expansion of $f$ is given, as in Lemma \ref{Maass0Fourier}, by $(\tau=u+iv)$
\[
f(\tau)
=
v^{\frac{1}{2}}\sum_{n\neq0}A(n)K_{0}\bigg(\frac{2\pi |n|v}{N}\bigg)e\bigg(\frac{n u}{N}\bigg)
,
\]
where
$e(w):=e^{2\pi i w}$, then the $q$-series associated to the positive coefficients of $f$ is defined by
\begin{equation}\label{pluspart}
f^+(\tau):=\sum_{n>0}A(n)q^{\frac{n}{N}}.
\end{equation}
\indent
We remark in passing that such a map from Maass forms to $q$-series was studied extensively by
Lewis and Zagier \cite{LewisZagier1,LewisZagier2}, and, as we shall see, was used by Zagier \cite{Za1} to show that
such functions are quantum modular forms. Such a construction is also closely related to the study of automorphic distributions in \cite{MS}.
\begin{theorem}\label{mainthm1}
For any $k, \ell\in\mathbb{N}$, with $1 \leq \ell \leq k$, and $j\in\{1,2,3,4\}$, there exists a Maass cusp form $G_{j,k,\ell}$ with eigenvalue $1/4$  for some congruence subgroup of $\operatorname{SL}_2(\Z)$, such that
\[
G_{j,k,\ell}^+(\tau)
=
q^{\alpha}
F_j\big(k,\ell;q^d\big)
\]
for some $\alpha\in\Q$, where $d=1$ if $j\in\{1,3\}$ and $d=2$ if $j\in\{2,4\}$ .
\end{theorem}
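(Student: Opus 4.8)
The plan is to reduce the theorem to three successive tasks: (1) express each $F_j(k,\ell;q)$ as an explicit indefinite binary theta series (a Hecke-type double sum); (2) recognize such a sum as the list of positive Fourier coefficients of a Maass waveform of eigenvalue $1/4$ on a congruence subgroup; and (3) promote this waveform to a genuine cusp form by exploiting the Habiro-ring structure visible in \eqref{FFnsDefn}. The case $k=1$, where by \eqref{ConnectionOurFamilySigma}--\eqref{ConnectionOurFamilySigmaStar} the $F_j$ collapse to $\sigma$ and $\sigma^*$ and the construction is Cohen's \cite{Co1}, serves both as a base case for the $q$-series identities and as a structural template. For $j\in\{2,4\}$ one first resolves the conditional convergence noted after \eqref{FFnsDefn} by averaging even and odd partial sums, exactly as \eqref{id2} and \eqref{id3} do for $\sigma$ and $\sigma^*$; after this, $q^{\alpha}F_j(k,\ell;q^d)$ is an honest power series convergent on $|q|<1$, so that the notation \eqref{pluspart} applies to it.

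The heart of the argument is Task (1). Using \eqref{A-Grelationbis} I would trade the nested multisum $H_n(k,\ell;b;q)$ for the Andrews--Gordon partition generating function $G_{k-1,\ell,k,2n-b+1}(q)$, and then feed the resulting single-variable identity into the Bailey pair / Bailey chain machinery — the same device that produces the indefinite theta representation of $\sigma$ recalled in the introduction. The goal is an identity of the form
\be
F_j(k,\ell;q) \;=\; \sum_{(m,n)\in\mathcal C_{j,k,\ell}} \varepsilon_{j,k,\ell}(m,n)\, q^{Q_{j,k,\ell}(m,n)},
\ee
with $Q_{j,k,\ell}$ an indefinite binary quadratic form, $\mathcal C_{j,k,\ell}\subset\Z^2$ a cone on which $Q_{j,k,\ell}\geq 0$, and $\varepsilon_{j,k,\ell}\in\{\pm1\}$ a sign pattern, all to be pinned down in a preliminary lemma (for $k=1$ one recovers the indefinite theta series for $\sigma$ of discriminant $24$). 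Making the nested Gaussian binomials in \eqref{Hdef} telescope down to a double sum, uniformly in $(k,\ell)$, is where I expect the genuine work to lie; the distinction $b=0$ versus $b=1$ together with the four outer factors $(q)_n$, $(q^2;q^2)_n$, $(q)_{n-1}q^{\binom{n+1}{2}}$, $(-1)_n(q)_{n-1}(-q)^n$ forces four parallel but structurally similar computations.

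Granting Task (1), an indefinite binary theta series of this shape is, after regrouping terms by the value of $Q_{j,k,\ell}$, exactly the positive-index part of the Fourier expansion of an eigenvalue-$1/4$ Maass form. Concretely, one takes $A(n)$ for $n>0$ to be the coefficient of $q^{n/N}$ in $q^{\alpha}F_j(k,\ell;q^d)$, and defines $A(n)$ for $n<0$ from the companion indefinite theta series obtained by summing $Q_{j,k,\ell}$ over the complementary cone — this is the $\sigma\leftrightarrow\sigma^*$ duality in disguise, reproducing \eqref{Tofndef} when $k=1$. That
\[
G_{j,k,\ell}(\tau) \;:=\; v^{1/2}\sum_{n\neq 0} A(n)\, K_0\!\left(\tfrac{2\pi\abs{n}v}{N}\right) e\!\left(\tfrac{nu}{N}\right)
\]
transforms as a Maass form on a congruence subgroup (with an appropriate multiplier) then follows, via the Mellin transform of $K_0$, from the analytic continuation and functional equation of $\sum_{n\neq 0}A(n)\abs{n}^{-s}$; for coefficients arising from an indefinite binary theta series this is the classical theory of zeta functions of indefinite quadratic forms, in the spirit of Cohen's treatment of $\sigma$, and it simultaneously produces the level $N$, the multiplier, and the rational shift $\alpha$ with $G_{j,k,\ell}^+=q^{\alpha}F_j(k,\ell;q^d)$.

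Finally, cuspidality — the genuinely new input. Since $(q^2;q^2)_n=(q;q)_n(-q;q)_n$ and $H_n$ is a polynomial, reading off \eqref{FFnsDefn} (and reindexing $n\mapsto n+1$ for $j\in\{3,4\}$) shows that each $F_j(k,\ell;q)$ has the form $\sum_{n\geq 0}c_n(q)(q;q)_n$ with $c_n(q)\in\Z[q]$, hence lies in the Habiro ring $\widehat{\Z[q]}=\varprojlim_n\Z[q]/((q;q)_n)$; in particular it has a well-defined finite radial limit, indeed a full Taylor-type expansion, as $q\to\zeta$ for every root of unity $\zeta$. Now decompose $G_{j,k,\ell}=G^{\mathrm{cusp}}+G^{\mathrm{Eis}}$ into its cuspidal and its Eisenstein/residual components. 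The plus-part of $G^{\mathrm{cusp}}$ extends to a quantum modular form in the sense of Zagier \cite{Za1}, so that its radial limits at roots of unity exist and are finite; by contrast, the plus-part of a nonzero eigenvalue-$1/4$ Eisenstein/residual component has divisor-type coefficients and therefore an honest singularity (a pole or logarithmic growth) as $q$ approaches a root of unity radially. Since $q^{\alpha}F_j(k,\ell;q^d)=G_{j,k,\ell}^+=G^{\mathrm{cusp},+}+G^{\mathrm{Eis},+}$ takes finite values at all roots of unity, we conclude $G^{\mathrm{Eis}}=0$, so $G_{j,k,\ell}$ is a cusp form. The main obstacle in the whole scheme is Task (1): extracting the indefinite theta representations by collapsing the multiple sums in \eqref{Hdef}; Tasks (2) and (3) then run close to the Andrews--Dyson--Hickerson--Cohen prototype, with (3) contributing the new, $q$-hypergeometric, mechanism behind cuspidality.
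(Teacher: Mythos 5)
Your outline agrees with the paper's strategy at both ends. Task (1) is carried out there via explicit Bailey pairs whose $\beta_n$ are the polynomials $H_n(k,\ell;b;q)$, fed into limiting forms of Bailey's lemma to produce the Hecke-type double sums (and the referee's remark confirms your alternative route through \eqref{A-Grelationbis} and the Andrews--Gordon side is workable). Task (3) --- cuspidality from the fact that the $q$-hypergeometric series terminate at roots of unity, combined with the criterion that the radial limit of $F^+$ at a cusp exists if and only if the Maass form is cuspidal there --- is exactly the paper's mechanism, phrased there without your spectral decomposition but to the same effect.

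The genuine gap is in Task (2). You assert that once $F_j$ is written as an indefinite binary theta series over a cone, modularity of the associated $K_0$-series ``follows from the analytic continuation and functional equation of $\sum_{n\neq 0}A(n)\abs{n}^{-s}$'' by the classical theory of zeta functions of indefinite quadratic forms. This begs the question: a generic signature-$(1,1)$ indefinite theta series restricted to a cone is \emph{not} the plus-part of a Maass form. By Zwegers' work it is a \emph{mock} Maass theta function, differing from a modular object by a completion term $\varphi^{c_1}_{a,b}-\varphi^{c_2}_{a,b}$ that is generically nonzero, and correspondingly its Dirichlet series has no clean functional equation. Cohen's treatment of $\sigma$ succeeded only because he identified the coefficients with a Hecke character of $\Q(\sqrt{6})$; the paper explicitly records that the analogous Hecke characters for general $(k,\ell)$ could not be identified, which is precisely why it instead invokes Zwegers' completion theorem and then verifies, for each $F_j$, that the characteristics $(a,b)$ satisfy the symmetry conditions $(\gamma_M a,\gamma_M b)\sim(a^*,b^*)$ and $(\gamma_M a^*,\gamma_M b^*)\sim(a,b)$ under the automorph $\gamma_M$ of the quadratic form, together with the inequalities $0<a_1\pm a_2<1$ pinning down the cone --- these force the completion terms to cancel. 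Your proposal contains no substitute for this verification, so the claimed modularity of $G_{j,k,\ell}$ is unsupported at exactly the step where the choice of parameters matters.
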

\begin{remark}
The cuspidality of the Maass waveform $G_{j,k,\ell}$ is far from obvious. Indeed, the authors are aware of only two approaches to prove such a result: either to explicitly write down representations for the $G_{j,k,\ell}$ in terms of Hecke characters (which we suspect exist, but which we were unable to identify), or, as we show below, to use the $q$-hypergeometric representations of the $F_j$ directly. This connection, which was hinted at for certain examples in \cite{RobMaass}, utilizes $q$-hypergeometric series to deduce modularity properties in an essential way.
\end{remark}

The proof of Theorem \ref{mainthm1} relies on the Bailey pair machinery and important results of Zwegers \cite{ZwegersMockMaass} giving modular completions for indefinite theta functions of a general shape. In particular, this family of indefinite theta functions
naturally describes the behavior of functions studied by many others in the literature, as described in a recent proof of Krauel, Woodbury, and the second author \cite{KRW} of unifying conjectures of Li, Ngo, and Rhoades \cite{RobMaass}. The indefinite theta functions considered here are given for $M\in\N_{\ge2}$ and vectors $a=(a_1,a_2)\in\Q^2$ and $b=(b_1,b_2)\in\Q^2$ such that $a_1 \pm a_2 \not \in \mathbb{Z}$:
\begin{equation} \label{Sdef}
\begin{aligned}
&
S_{a,b;M}(\tau)
:=
\\
&
\Bigg(\sum_{n\pm \nu\geq-\lfloor a_1\pm a_2\rfloor}+\sum_{n\pm\nu<-\lfloor a_1\pm a_2\rfloor}\Bigg)e\big((M+1)b_1n-(M-1)b_2\nu\big)q^{\frac12\big((M+1)(n+a_1)^2-(M-1)(\nu+a_2)^2\big)}.
\end{aligned}
\end{equation}
\indent
The next theorem states conditions under which $S_{a,b;M}$ is the image of a Maass waveform under the map defined in \eqref{pluspart}.   The definitions of $\gamma_M$, the equivalence relation $\sim$, and the operation $^*$ are given in Section \ref{ZwegersWorkSection}.
\begin{theorem}\label{mainthm2}
Suppose that $a,b\in\Q^2$ with $a\neq0$, $a_1\pm a_2\not\in\Z$, $M\in\N_{\geq2}$, and $(\gamma_M a,\gamma_M b)\sim(a,b)$ or both  $(\gamma_M a,\gamma_M b)\sim(a^*,b^*)$ and $(\gamma_M a^*,\gamma_M b^*)\sim(a,b)$ hold. Then $S_{a,b;M}=F^+$ for a Maass waveform $F$ of eigenvalue $1/4$ on a congruence subgroup of $\operatorname{SL}_2(\Z)$.
\end{theorem}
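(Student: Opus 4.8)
The plan is to realize $S_{a,b;M}$ as the \emph{holomorphic piece} of one of Zwegers' non-holomorphic completed indefinite theta functions and to read off modularity from his transformation formulas. The exponent in \eqref{Sdef}, namely $\tfrac12\big((M+1)(n+a_1)^2-(M-1)(\nu+a_2)^2\big)$, is a binary quadratic form of signature $(1,1)$ with rational characteristic $a$ and rational elliptic parameter $b$, and the two sums in \eqref{Sdef} are precisely the two halves of the lattice cut out by the light-cone condition there; thus $S_{a,b;M}$ is of the exact shape treated in \cite{ZwegersMockMaass}. The first step is to match notation and invoke the main result of \cite{ZwegersMockMaass}: there is a real-analytic completion $\widehat{S}_{a,b;M}$ on $\mathbb{H}$, obtained by replacing the two sign functions implicit in \eqref{Sdef} by the appropriate error-function expressions in $v$, which satisfies $\Delta\widehat{S}_{a,b;M}=\tfrac14\widehat{S}_{a,b;M}$ and whose limiting ($v\to\infty$) holomorphic part is $S_{a,b;M}$ itself. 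The hypotheses $a\neq0$ and $a_1\pm a_2\notin\Z$ ensure no lattice point lies on the light cone, so $S_{a,b;M}$ is an honest power series in a fixed fractional power of $q$ with strictly positive exponents and no boundary ambiguity.

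The second step is to push the two generators of $\SL_2(\Z)$ through Zwegers' transformation laws. Under $\tau\mapsto\tau+1$, rationality of $a$ and $b$ gives $\widehat{S}_{a,b;M}\big|T^{h}=(\text{root of unity})\,\widehat{S}_{a,b;M}$ for a suitable $h\in\N$. Under $\tau\mapsto-1/\tau$, Zwegers' formula expresses $\widehat{S}_{a,b;M}(-1/\tau)$, up to an elementary automorphy factor, as $\widehat{S}_{\gamma_M a,\gamma_M b;M}(\tau)$, with $\gamma_M$ the substitution recorded in Section \ref{ZwegersWorkSection}; a companion, reflected relation links $\widehat{S}_{a,b;M}$ to $\widehat{S}_{a^*,b^*;M}$. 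Here the hypothesis is decisive: the equivalence $(\gamma_M a,\gamma_M b)\sim(a,b)$ (the relation $\sim$ being generated by exactly the $\Z^2$-shifts and sign changes under which the theta series is literally unchanged) forces the $S$-transform to return $\widehat{S}_{a,b;M}$ to itself, so $\widehat{S}_{a,b;M}$ is already invariant, up to a finite-order multiplier, under the group generated by $T^{h}$ and $S$; in the alternative case the same conclusion follows after chaining the $S$-relation with the $^*$-relation and the two displayed equivalences, possibly enlarging the level. Since all of these conditions are congruence conditions on $u$ and on $(a,b)$ reduced modulo the common denominator $D$ of its entries and $M$ (the action of $\SL_2(\Z)$ on those reduced data factoring through $\SL_2(\Z/D\Z)$), the resulting invariance group is a congruence subgroup of $\SL_2(\Z)$.

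The third step is to extract the Maass waveform. Put $F:=\widehat{S}_{a,b;M}$, normalized so that its expansion takes the form in \eqref{pluspart}; by the previous step $F$ is real-analytic on $\mathbb{H}$, killed by $\Delta-\tfrac14$, and automorphic on a congruence subgroup with finite-order multiplier, hence a Maass waveform of eigenvalue $1/4$ for a suitable $N$. The eigenvalue equation and periodicity force its expansion at $i\infty$ to be $F(\tau)=(\text{possible }v^{1/2}\text{ and }v^{1/2}\log v\text{ terms})+v^{1/2}\sum_{n\neq0}A(n)K_0(2\pi|n|v/N)e(nu/N)$, and Zwegers' explicit description of the completion shows that the coefficients $A(n)$ with $n>0$ are exactly the coefficients of the power series $S_{a,b;M}$, the $n<0$ part being the non-holomorphic correction in disguise (precisely the $\sigma$ versus $\sigma^*$ dichotomy recorded in \eqref{ConnectionOurFamilySigma}--\eqref{ConnectionOurFamilySigmaStar}). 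Hence $F^+=S_{a,b;M}$, as claimed.

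The main obstacle is the bookkeeping in the second step: one must simultaneously track Zwegers' multiplier system and the $\gamma_M$- and $^*$-actions on $(a,b)$, and verify that the stated hypothesis is exactly what forces the non-holomorphic error terms to reorganize consistently under inversion, so that $\widehat{S}_{a,b;M}$ is a genuine, rather than merely mock, Maass form. A secondary technical point is confirming that the completed form's Fourier expansion really does reassemble into the $K_0$-Bessel series of \eqref{pluspart} with the holomorphic theta series lying precisely in the positive-index part (and that a $v^{1/2}\log v$ term, if present, is harmless for a waveform); this is the exact sense in which the theta/$q$-hypergeometric structure governs the passage from positive to negative Fourier coefficients.
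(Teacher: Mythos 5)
Your overall strategy --- complete the indefinite theta function \`a la Zwegers, use the hypothesis to kill the correction terms, and extract a Maass form --- is the paper's strategy, but the decisive step is argued through a mechanism that is not how Zwegers' machinery works, and this is a genuine gap rather than loose bookkeeping. First, you have the roles of the two objects reversed: in \cite{ZwegersMockMaass} it is the \emph{uncompleted} function $\Phi_{a,b}$ (the $K_0$-Bessel series whose positive part is essentially $S_{a,b;M}$) that satisfies $\Delta\Phi_{a,b}=\frac14\Phi_{a,b}$, while the completion $\widehat\Phi_{a,b}=\Phi_{a,b}+\varphi^{c_1}_{a,b}-\varphi^{c_2}_{a,b}$ is the modular object but is \emph{not} an eigenfunction of $\Delta$, and does not have a pure $K_0$-Bessel Fourier expansion, unless the correction terms vanish. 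Your $F:=\widehat S_{a,b;M}$ is therefore not automatically ``real-analytic, killed by $\Delta-\frac14$, and automorphic''; the entire content of the hypothesis is to force $\varphi^{c_1}_{a,b}=\varphi^{c_2}_{a,b}$ so that the two objects coincide.

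Second, the matrix $\gamma_M$ has nothing to do with the inversion $\tau\mapsto-1/\tau$. Zwegers' $S$-transformation sends $\widehat\Phi_{a,b}$ to a sum of the $\widehat\Phi_{-b+p,a}$ over $p\in A^{-1}\Z^2\pmod{\Z^2}$ (Theorem \ref{Zthm}), and modularity of $\widehat\Phi_{a,b}$ on a congruence subgroup holds for \emph{all} rational $a,b$ without any hypothesis on $\gamma_M$. Rather, $\gamma_M$ is an element of $\mathrm{Aut}^+(Q,\Z)$ with $\gamma_Mc_1=c_2$, and the correct argument (Proposition \ref{MainThm2ZwegersResult}) is: $\varphi^{c_1}_{a,b}=\varphi^{\gamma_Mc_1}_{\gamma_Ma,\gamma_Mb}=\varphi^{c_2}_{\gamma_Ma,\gamma_Mb}$ by Lemma \ref{Zlem}, and then $(\gamma_Ma,\gamma_Mb)\sim(a,b)$ together with the elliptic transformations $\varphi_{a+\lambda,b+\mu}=e(B(a,\mu))\varphi_{a,b}$ and $\varphi_{-a,-b}=\varphi_{a,b}$ gives $\varphi^{c_2}_{\gamma_Ma,\gamma_Mb}=\varphi^{c_2}_{a,b}$, whence the correction terms cancel; no modular transformation of $\tau$ enters. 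In the second alternative one additionally needs the identity $\widehat\Phi_{a^*,b^*}=\widehat\Phi_{a,b}$ (proved in the paper by parameterizing $C_Q$ by $c(t)$ and using $c(-t)=c^*(t)$) and then averages $\widehat\Phi_{a,b}$ with $\widehat\Phi_{a^*,b^*}$ so that the four $\varphi$-terms cancel in pairs using the two assumed equivalences. As written, your proof does not use the hypothesis in any load-bearing way, and the claim that the $S$-transform ``returns $\widehat S_{a,b;M}$ to itself'' via $\gamma_M$ is both unnecessary and false as stated.
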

\begin{remark}
The family of $q$-series $F_j$ in Theorem \ref{mainthm1} are all specializations of the series $S_{a,b;M}$. We note that although all of the functions $F_j$ are cusp forms, for a general Maass form $F$ in Theorem \ref{mainthm2} this is not always true. For example, the function $W_1$ considered in Theorem 2.1 of \cite{RobMaass} is shown not to be a cusp form, and using Theorem 4.2 and (4.4) one can easily check that the function $W_1$ fits into the family $S_{a,b;M}$.
\end{remark}

In addition to the relations of the $q$-series $F_j$ to Maass forms, following Zagier's work, we find that these functions are instances of so-called quantum modular forms \cite{Za1}. These new types of modular objects, which are reviewed in Section \ref{MaassFormsHolomorphization}, are connected to many important combinatorial generating functions, knot and $3$-manifold invariants, and are intimately tied to the important volume conjecture for hyperbolic knots. Roughly speaking, a \emph{quantum modular form} is a function which is defined on a subset of $\mathbb Q$ and whose failure to transform 
modularly is described by  a particularly ``nice'' function.   (See Definition \ref{cocycle}.)    Viewed from a general modularity framework, the generating function of the set of positive coefficients of a Maass form automatically has quantum modular transformations when considered as (possibly divergent) asymptotic expansions. The situation becomes much nicer when, as happens for $\sigma$ and $\sigma^*$, $q$-hypergeometric representations can be furnished which show convergence at various roots of unity (as the series specialize to finite sums of roots of unity). This quantum modularity result, as well as the relation of the associated quantum modular forms to the cuspidality of the Maass form is described in Theorem \ref{MaassQMFThm}. In particular, if such a $q$-series is an element of the Habiro ring, which essentially means that it can be written as
\[
\sum_{n\geq0}a_n(q)(q)_n
\]
for polynomials $a_n(q)\in\Z[q]$, then it is apparent that it converges at all roots of unity $q$, and hence the associated Maass form is cuspidal. This observation, combined with Zagier's ideas, yields the following corollary (the definitions of quantum modular forms and related terms are given in Section \ref{MaassFormsHolomorphization}).
\begin{theorem}\label{mainthm3}
For any choice of $j,k,\ell$ as in Theorem \ref{mainthm1}, the functions $F_{j,k,\ell}$ are quantum modular forms of weight $1$ on a congruence subgroup with quantum set $\mathbb P^1(\Q)$. Moreover, the cocycles $r_{\gamma}$, defined in \eqref{cocycle}, are real-analytic on $\R\setminus\{\gamma^{-1}i\infty\}$.
\end{theorem}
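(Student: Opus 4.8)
The plan is to derive Theorem~\ref{mainthm3} from Theorem~\ref{mainthm1} together with the period-integral machinery attaching quantum modular forms to holomorphic projections of Maass cusp forms of eigenvalue $1/4$ (Theorem~\ref{MaassQMFThm}, following Zagier~\cite{Za1} and Lewis--Zagier~\cite{LewisZagier1,LewisZagier2}); the only point not already contained in that machinery is the identification of the quantum set with all of $\mathbb{P}^1(\Q)$, which comes from the Habiro-ring membership of the $F_j$.

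\emph{Step 1: reduction to $f^+$ for an abstract Maass cusp form.} By Theorem~\ref{mainthm1} we have $F_{j,k,\ell}(\tau)=q^{\alpha}F_j(k,\ell;q^d)=G_{j,k,\ell}^+(\tau)$ for a Maass cusp form $f:=G_{j,k,\ell}$ of eigenvalue $1/4$ on a congruence subgroup $\Gamma_0\leq\SL_2(\Z)$ (with a possible multiplier). The rescaling $q\mapsto q^d$ and the twist by $q^{\alpha}$ with $\alpha\in\Q$ replace $\Gamma_0$ by a smaller congruence subgroup and alter the multiplier, but leave the weight equal to $1$, do not change $\mathbb{P}^1(\Q)$, and multiply each cocycle by a root-of-unity-valued (hence real-analytic) function of the argument. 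So it suffices to prove the statement for $f^+$ with $f$ an arbitrary Maass cusp form of eigenvalue $1/4$ on a congruence subgroup.

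\emph{Step 2: transformation law and real-analyticity of the cocycles.} Expand $f$ as in Lemma~\ref{Maass0Fourier} and let $f^-$ be the companion built from the negative Fourier coefficients. The Maass form $f$ interpolates between $f^+$ and $f^-$ through a $K_0$-Bessel integral transform; unwinding this transform exactly as in \cite{LewisZagier1,Za1} yields, for each $\gamma=\smatr{a}{b}{c}{d}\in\Gamma$, an identity of the shape
\[
\big(f^+ - f^+|_{1}\gamma\big)(x) = r_{\gamma}(x) = \int_{\gamma^{-1}i\infty}^{i\infty}\wh{f}(z)\,\varphi(z,x)\,dz,
\]
where $|_{1}$ denotes the weight-$1$ slash action with the multiplier of $f$ (the weight is $1$ because $\tfrac14=s(1-s)$ with $s=\tfrac12$), $\wh{f}$ is the natural $1$-form attached to $f$, and $\varphi$ is the explicit Lewis--Zagier kernel. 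Since $f$ is a \emph{cusp} form it decays exponentially at every cusp, so the integral converges absolutely for every $x\in\R\setminus\{\gamma^{-1}i\infty\}$ and defines a real-analytic function there, the only singularity — at $x=\gamma^{-1}i\infty$ — coming from the lower endpoint of the path colliding with the kernel singularity. This is exactly the place where the cuspidality of $G_{j,k,\ell}$ supplied by Theorem~\ref{mainthm1} is used essentially: for a non-cuspidal Maass form the endpoint would contribute terms degrading the regularity of $r_\gamma$. The cocycle relation $r_{\gamma_1\gamma_2}=r_{\gamma_1}|_1\gamma_2+r_{\gamma_2}$ follows from additivity of the integral over concatenated geodesics.

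\emph{Step 3: the quantum set is $\mathbb{P}^1(\Q)$, and the main obstacle.} It remains to see that $F_{j,k,\ell}$ is defined at every point of $\mathbb{P}^1(\Q)$ and that the value there is compatible with the transformation law of Step~2; the cusp $\infty$ is immediate since $F_{j,k,\ell}(i\infty)$ is the constant term. For the rationals we use that $F_j(k,\ell;q)$ lies in the Habiro ring: writing $F_j(k,\ell;q)=\sum_{n\geq0}a_n(q)(q)_n$ with $a_n(q)\in\Z[q]$, for $q=\zeta$ any root of unity one has $(\zeta)_n=0$ for all large $n$, so $F_j(k,\ell;\zeta)$ is a finite sum, whence $q^{\alpha}F_j(k,\ell;q^d)$ is defined at every root of unity and $F_{j,k,\ell}$ is defined on all of $\Q$; the same finiteness of the relevant partial sums shows that the radial limit of $f^+$ from $\H$ exists at each rational and equals this value, making it compatible with Step~2. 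The genuinely hard part is Step~2 itself: identifying the correct kernel $\varphi$ and the precise sense in which $f^+$ is a ``holomorphic Eichler integral'' for eigenvalue $1/4$, proving that $f^+ - f^+|_1\gamma$ literally equals the period integral $r_\gamma$ (not merely asymptotically), and carrying out the analytic continuation of $r_\gamma$ past every rational except $\gamma^{-1}i\infty$ with the correct local behavior there — this is the Lewis--Zagier/Zagier analysis packaged in Theorem~\ref{MaassQMFThm}. Once it is granted, the bookkeeping of the substitutions in Step~1, the cocycle relation, and the Habiro-ring argument of Step~3 are routine, with only the mild secondary subtlety of comparing the finite-sum value at a root of unity with the radial limit from $\H$, which needs just uniform control of the eventually vanishing tails.
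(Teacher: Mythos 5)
Your proposal is correct and follows essentially the same route as the paper: Theorem \ref{mainthm3} is deduced by combining Theorem \ref{mainthm1} with the period-integral machinery already packaged in Theorem \ref{MaassQMFThm}, and the quantum set is identified as all of $\mathbb P^1(\Q)$ because the $q$-series in \eqref{FFnsDefn} terminate at roots of unity, so their radial limits exist and agree with these finite values by Abel's theorem. The paper likewise notes, as your Step 1 does, that the presence of a multiplier requires only a routine review of the proof of Theorem \ref{MaassQMFThm}.
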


The paper is organized as follows. In Section \ref{PrelimSection}, we recall the basic preliminaries and definitions needed for the proofs and explicit formulations of the main theorems, which are then proven in Section \ref{ProofsSection}. As mentioned above, the main tools are the Bailey pair method, work of Zwegers in \cite{ZwegersMockMaass}, and ideas from Zagier's seminal paper on quantum modular forms \cite{Za1}. We conclude in Section \ref{QuestionsSection} with further commentary on related questions and possible future work.

\section{Preliminaries}\label{PrelimSection}

\subsection{Bailey pairs}\label{BaileyPairsSection}
In this subsection, we briefly recall the Bailey pair machinery, which is a powerful tool for connecting $q$-hypergeometric series with series such as indefinite theta functions. The basic input of this method is a \emph{Bailey pair} relative to $a$, which is a pair of sequences $(\alpha_n,\beta_n)_{n \geq 0}$ satisfying
\begin{equation} 
\beta_n = \sum_{k=0}^n \frac{\alpha_k}{(q)_{n-k}(aq)_{n+k}}. \notag
\end{equation}
Bailey's lemma then provides a framework for proving many $q$-series identities. For our purposes, we need only a limiting form, which says that if $(\alpha_n,\beta_n)$ is a Bailey pair relative to $a$, then, provided both sums converge, we have the identity

\begin{equation} \label{limitBailey}
\sum_{n \geq 0} (\rho_1)_n(\rho_2)_n \bigg(\frac{aq}{\rho_1 \rho_2}\bigg)^n \beta_n = \frac{\Big(\frac{aq}{\rho_1}\Big)_{\infty}\Big(\frac{aq}{\rho_2}\Big)_{\infty}}{(aq)_{\infty}\Big(\frac{aq}{\rho_1 \rho_2}\Big)_{\infty}} \sum_{n \geq 0} \frac{(\rho_1)_n(\rho_2)_n\Big(\frac{aq}{\rho_1 \rho_2}\Big)^n }{\Big(\frac{aq}{\rho_1}\Big)_n\Big(\frac{aq}{\rho_2}\Big)_n}\alpha_n.
\end{equation}
For more on Bailey pairs and Bailey's lemma, see \cite{An1,An2,war}.

We record four special cases of \eqref{limitBailey} for later use.
\begin{lemma} \label{Baileylemmaspecial}
The following are identities are true, provided that both sides converge.  If $(\alpha_n,\beta_n)$ is a Bailey pair relative to $1$, then
\begin{align}
\sum_{n \geq 1} (-1)^n(q)_{n-1}q^{\binom{n+1}{2}}\beta_n &= \sum_{n \geq 1} \frac{(-1)^nq^{\binom{n+1}{2}}}{1-q^n}\alpha_n, \label{Baileya=1eq1} \\
\sum_{n \geq 1} \big(q^2;q^2\big)_{n-1} (-q)^n\beta_n &= \sum_{n \geq 1} \frac{(-q)^n}{1-q^{2n}}\alpha_n, \label{Baileya=1eq2}
\end{align}
and if $(\alpha_n,\beta_n)$ is a Bailey pair relative to $q$, then
\begin{align}
\sum_{n \geq 0} (-1)^n(q)_{n}q^{\binom{n+1}{2}}\beta_n &= (1-q)\sum_{n \geq 0} (-1)^nq^{\binom{n+1}{2}}\alpha_n, \label{Baileya=qeq1} \\
\sum_{n \geq 0} \big(q^2;q^2\big)_{n} (-1)^n\beta_n &= \frac{1-q}{2}\sum_{n \geq 0} (-1)^n\alpha_n. \label{Baileya=qeq2}
\end{align}
\end{lemma}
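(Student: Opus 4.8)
The plan is to prove the four identities of Lemma \ref{Baileylemmaspecial} by specializing the limiting form of Bailey's lemma in \eqref{limitBailey} to suitable choices of the parameters $\rho_1,\rho_2$ and then taking limits. The strategy in each case is dictated by which power of $q$ (or which product of $q$-rising factorials) appears as the weight on $\beta_n$ on the left-hand side.

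First I would treat \eqref{Baileya=1eq1}. Here $a=1$, so \eqref{limitBailey} reads $\sum_{n\geq0}(\rho_1)_n(\rho_2)_n(q/(\rho_1\rho_2))^n\beta_n = \frac{(q/\rho_1)_\infty(q/\rho_2)_\infty}{(q)_\infty(q/(\rho_1\rho_2))_\infty}\sum_{n\geq0}\frac{(\rho_1)_n(\rho_2)_n(q/(\rho_1\rho_2))^n}{(q/\rho_1)_n(q/\rho_2)_n}\alpha_n$. To produce the factor $(q)_{n-1}q^{\binom{n+1}{2}}$ on the left one wants the two numerator factorials to combine (after a limit) into $(q)_{n-1}$ and the monomial $(q/(\rho_1\rho_2))^n$ to become $q^{\binom{n+1}{2}}$ up to sign. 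The natural choice is to send $\rho_1\to\infty$ and set $\rho_2=q$ (or symmetrically); one must be slightly careful because $a=1$ forces the $n=0$ term of $\beta$ to be handled separately — note the sums in the lemma start at $n=1$, which reflects exactly the pole/ambiguity at $n=0$ when $a=1$ (since $(aq)_{n+k}=(q)_{n+k}$ vanishes issues do not arise, but $\beta_0$ is not determined by the $\alpha$'s in the usual normalization). Concretely, after taking $\rho_1\to\infty$ one gets $(\rho_1)_n(q/\rho_1)^n \to (-1)^nq^{\binom{n}{2}}$, and with $\rho_2=q$ the factor $(\rho_2)_n=(q)_n$ on the left and $(q/\rho_2)_n=(q)_{n-1}\cdot$(something) on the right; bookkeeping the shift by one index turns $(q)_n/(q)_{n-1}\cdot\frac{1}{1-q^n}$ appropriately and the prefactor $\frac{(q/\rho_1)_\infty(q/\rho_2)_\infty}{(q)_\infty(q/(\rho_1\rho_2))_\infty}$ collapses to $1$ in the limit. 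This yields \eqref{Baileya=1eq1}. Identity \eqref{Baileya=1eq2} is obtained the same way but with the base $q$ replaced by $q^2$ in the relevant places: one takes $\rho_1\to\infty$ and chooses $\rho_2$ so that $(\rho_2)_n$ and the limit of $(\rho_1)_n(aq/(\rho_1\rho_2))^n$ combine to $(q^2;q^2)_{n-1}(-q)^n$; the factor $1-q^{2n}$ in the denominator on the right again comes from an index shift in a $(q^2;q^2)$-factorial.

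For the two Bailey pairs relative to $a=q$, \eqref{Baileya=qeq1} and \eqref{Baileya=qeq2}, the same mechanism applies with $a=q$ throughout \eqref{limitBailey}; now $(aq)_\infty=(q^2;q^2)_\infty\cdot$(no — $(aq)_\infty=(q^2;q)_\infty=(q)_\infty/(1-q)$), which is precisely where the overall factor $(1-q)$ in \eqref{Baileya=qeq1} and the $\frac{1-q}{2}$ in \eqref{Baileya=qeq2} originate. For \eqref{Baileya=qeq1} one again sends $\rho_1\to\infty$ and picks $\rho_2$ (e.g. $\rho_2\to\infty$ as well, or a root-of-unity specialization) so that the left side acquires $(q)_nq^{\binom{n+1}{2}}$ and the right side becomes a bare alternating sum of $\alpha_n$; the prefactor ratio then simplifies to $1-q$. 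For \eqref{Baileya=qeq2} the extra factor of $\tfrac12$ arises from a specialization $\rho_2=-q^{1/2}$ or similar that produces $(q^2;q^2)_n$ on the left (via $(\rho_1)_n(\rho_2)_n$ telescoping into a base-$q^2$ product) and contributes a $\tfrac12$ through the limiting prefactor $\frac{(aq/\rho_2)_\infty}{(aq/(\rho_1\rho_2))_\infty}$.

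The main obstacle I anticipate is not conceptual but the careful handling of the limits and the convergence hypotheses: each identity is asserted only "provided that both sides converge," and in the $\rho_i\to\infty$ limit one is interchanging a limit with an infinite sum, which needs the polynomial/finite-sum structure of the eventual applications (the $\beta_n$ will be the $H_n$ polynomials times factorials) to be legitimate — this is exactly why the paper phrases these as formal identities to be applied in convergent situations. A secondary subtlety is the index-shift bookkeeping that moves the sums to start at $n=1$ in the $a=1$ cases and correctly produces the $\frac{1}{1-q^n}$ and $\frac{1}{1-q^{2n}}$ denominators; getting the exponent $\binom{n+1}{2}$ versus $\binom{n}{2}$ right after combining $(\rho_1)_n(aq/(\rho_1\rho_2))^n\to(-1)^nq^{\binom{n}{2}}$ with a residual factor of $q^n$ from the $\rho_2$-specialization requires attention but is routine once the choices of $\rho_1,\rho_2$ are fixed.
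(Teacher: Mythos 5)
Your treatment of the two identities relative to $a=q$ is workable in spirit (the factor $(1-q)$ does arise from the ratio $(aq/\rho_1)_\infty/(aq)_\infty$, and the $\tfrac12$ from $(-1)_\infty=2(-q)_\infty$), but the correct specialization there is $\rho_1=q$ held \emph{fixed}, with $\rho_2\to\infty$ for \eqref{Baileya=qeq1} and $\rho_2=-q$ for \eqref{Baileya=qeq2} — not $\rho_1\to\infty$, which would produce $q^{n^2+n}$ rather than $(-1)^n(q)_nq^{\binom{n+1}{2}}$ on the left. The genuine gap, however, is in your argument for \eqref{Baileya=1eq1} and \eqref{Baileya=1eq2}. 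With $a=1$, the specialization $\rho_2=q$ makes the right-hand side of \eqref{limitBailey} ill-defined: the denominators contain $(aq/\rho_2)_n=(1)_n=0$ for every $n\geq1$, and the prefactor contains $(aq/\rho_2)_\infty=(1)_\infty=0$, so one faces $0/0$ rather than a clean limit. Moreover, on the left this choice yields $(q)_n$ and the exponent $\binom{n}{2}$, not $(q)_{n-1}$ and $\binom{n+1}{2}$, and no index-shift bookkeeping repairs this: any direct substitution of $\rho_1$ or $\rho_2$ equal to $1$ when $a=1$ collapses both sides to their $n=0$ terms.

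The missing idea is the derivative trick. Set $a=1$ in \eqref{limitBailey}, apply $\frac{d}{d\rho_1}\big|_{\rho_1=1}$ to both sides, and only then let $\rho_2\to\infty$ (for \eqref{Baileya=1eq1}) or set $\rho_2=-1$ (for \eqref{Baileya=1eq2}). Since $(1)_n=0$ for $n\geq1$, the product rule leaves exactly one surviving term per $n\geq1$, governed by
\[
\frac{d}{d\rho_1}(\rho_1)_n\Big|_{\rho_1=1}=-(q)_{n-1}\qquad(n\geq1).
\]
This is precisely the source of the factor $(q)_{n-1}$ on the left, of the restriction of both sums to $n\geq1$, and — via $(q)_{n-1}/(q)_n=1/(1-q^n)$ coming from the denominator $(aq/\rho_1)_n\big|_{\rho_1=1}=(q)_n$ on the right (combined with $\tfrac{2}{1+q^n}$ from $(-1)_n/(-q)_n$ in the second case) — of the denominators $1-q^n$ and $1-q^{2n}$. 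This is the route the paper takes, and without it the first two identities cannot be extracted from \eqref{limitBailey} by substitution alone.
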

\begin{proof}
For the first two we set $a=1$ in \eqref{limitBailey}, take the derivative $\frac{d}{d\rho_1} \big | _{\rho_1=1}$, and let $\rho_2 \to \infty$ or $\rho_2 = -1$.   For the second two we set $a=q$, $\rho_1=q$, and let $\rho_2 \to \infty$ and $\rho_2 = -q$, respectively.
\end{proof}

\subsection{Maass waveforms and Cohen's example}\label{MaassFormsSctn}
We now recall the basic definitions and facts from the theory of Maass waveforms. The interested reader is also referred to \cite{Bump, Iwaniec02} for more details.
Maass waveforms, or simply Maass forms, are functions on $\mathbb H$ which transform like modular functions but instead of being meromorphic are eigenfunctions of the hyperbolic Laplacian.
For $\tau = u+i v \in \mathbb H$ (with $u, v\in \R$), this operator is defined by
\[\Delta := -v^2 \bigg(\frac{\partial^2}{\partial u^2} + \frac{\partial^2}{\partial v^2}\bigg).\]
We also require the fact that any translation invariant function $f$, namely a function satisfying $f(\tau+1)=f(\tau)$, has
 a Fourier expansion at infinity of the form
 \begin{align*}\label{Fexpgeneral} f(\tau) = \sum_{n\in\mathbb Z} a_f(v;n) e(nu),
 \end{align*}
 where $$a_f(v;n) := \int_{0}^1 f(t+iv)e(-nt)dt.$$
Similarly, such an $f$ has Fourier expansions at any cusp $\mathfrak a$ of a congruence subgroup $\Gamma \subseteq \SL_2(\mathbb{Z})$.   We denote these Fourier coefficients by $a_{f,\mathfrak a}(v;n)$.

\begin{definition}\label{MaassForm0Def}  Let $\Gamma \subseteq  \textnormal{SL}_2(\mathbb Z)$ be a congruence subgroup.
A {\it{Maass waveform}} $f$ on $\Gamma$
with eigenvalue $\lambda=s(1-s) \in \mathbb C$ is a smooth function $f:\mathbb H \to \mathbb C$ satisfying
\begin{enumerate}[leftmargin=*,align=left]
\item[(i)] $f(\gamma \tau) = f(\tau)$ for all $\gamma \in \Gamma;$
\item[(ii)]  $f$ grows at most polynomially at the cusps;
\item[(iii)] $\Delta (f)=\lambda f$.
 \end{enumerate}
 If, moreover, $a_{f,\mathfrak a}(0;0) = 0$ for each cusp $\mathfrak a$ of $\Gamma$, then $f$ is a {\it{Maass cusp form}}.
\end{definition}
We also require the general shape of Fourier expansions of such Maass forms. As all of our forms are cusp forms, the following is sufficient for our purposes. The proof may be found in any standard text on Maass forms (such as those listed above), but we note that it follows from the differential equation (iii), growth condition (ii), and the periodicity of $f$. 
\begin{lemma} \label{Maass0Fourier}
 Let {\bf $f$} be a Maass cusp form with eigenvalue $\lambda=s(1-s)$.
  Then there exist $\kappa_1, \kappa_2, a_f(n) \in \mathbb C$, $n\neq 0$, such that
$$f(\tau) = \kappa_1 v^s + \kappa_2 v^{1-s}\delta_s(v) + v^{\frac12} \sum_{n\neq 0} a_f(n) K_{s-\frac12} (2\pi |n| v)e(nu),$$
where $K_\nu$ is the modified Bessel function of the second kind, and $\delta_s(v)$  is equal to   $\log(v)$ or $1$, depending on whether $s=1/2$ or $s\neq 1/2$, respectively. Such an expansion
also exists at all cusps.
\end{lemma}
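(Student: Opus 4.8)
The plan is to start from the hypotheses and reduce the PDE $\Delta f = \lambda f$ to a family of ordinary differential equations, one for each Fourier mode, whose solution space is two-dimensional and explicitly Bessel/Euler in nature. First, since $f$ is invariant under the parabolic stabilizer of the cusp (so that $f(\tau+1)=f(\tau)$), it admits the Fourier expansion $f(\tau)=\sum_{n\in\Z}a_f(v;n)e(nu)$ with $a_f(v;n)=\int_0^1 f(t+iv)e(-nt)\,dt$, exactly as recalled before the statement. The key move is to apply $\Delta$ mode-by-mode rigorously: integrating the eigenvalue equation against $e(-nt)$ over $[0,1]$, using smoothness of $f$ to differentiate under the integral in $v$ and integrating by parts twice in $t$ (the boundary terms cancelling by periodicity), one converts $\Delta f=\lambda f$ into the scalar ODE
\[
v^2 a_f''(v;n)-4\pi^2 n^2 v^2\, a_f(v;n)+\lambda\, a_f(v;n)=0
\]
for each $n$, where primes denote $d/dv$. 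This transfers the entire problem onto elementary ODE analysis, with the growth condition (ii) selecting the admissible solution at each mode.

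Next I would analyze the two cases. For $n\neq 0$, writing $\lambda=s(1-s)=\tfrac14-\mu^2$ with $\mu=s-\tfrac12$ and substituting $a_f(v;n)=v^{1/2}w(2\pi|n|v)$ turns the ODE into the modified Bessel equation of order $\mu=s-\tfrac12$, whose solution space is spanned by $v^{1/2}K_{s-1/2}(2\pi|n|v)$ and $v^{1/2}I_{s-1/2}(2\pi|n|v)$. Since $I_{s-1/2}(y)$ grows exponentially as $y\to\infty$ while $f$ grows at most polynomially at the cusp by (ii), the $I$-Bessel component must vanish, leaving $a_f(v;n)=a_f(n)\,v^{1/2}K_{s-1/2}(2\pi|n|v)$ for a constant $a_f(n)$. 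For $n=0$ the ODE collapses to the equidimensional (Euler) equation $v^2 a_f''(v;0)+s(1-s)a_f(v;0)=0$; the indicial polynomial $\alpha(\alpha-1)+s(1-s)=0$ has roots $\alpha=s$ and $\alpha=1-s$, giving the general solution $\kappa_1 v^s+\kappa_2 v^{1-s}$ when $s\neq\tfrac12$, and the degenerate solution $\kappa_1 v^{1/2}+\kappa_2 v^{1/2}\log v$ when $s=\tfrac12$. This is precisely the $\kappa_1 v^s+\kappa_2 v^{1-s}\delta_s(v)$ term, with $\delta_s(v)$ accounting for the repeated root. Assembling the modes yields the claimed expansion, and the same argument applied to the $\Gamma$-conjugate of $f$ at any other cusp $\mathfrak a$ (using that the parabolic stabilizer there is again infinite cyclic) produces the corresponding expansion at $\mathfrak a$.

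I expect the genuine mathematical content to be light, so the main obstacle is purely one of rigor and bookkeeping rather than ideas. The two places requiring care are (a) justifying the interchange of $\Delta$ with the Fourier integral, which I would handle by integrating the PDE against $e(-nt)$ as above rather than differentiating the series termwise, so that smoothness of $f$ on the compact interval of integration suffices and no a priori convergence of the Fourier series is needed; and (b) invoking the growth hypothesis correctly to discard the $I$-Bessel solutions and to constrain the $n=0$ term, where one must note that polynomial growth in $v$ is compatible with $v^s$, $v^{1-s}$, and $v^{1/2}\log v$ but not with exponential growth, thereby pinning down the admissible solution uniquely at each nonzero frequency. Since the result is entirely standard, I would in practice cite \cite{Bump,Iwaniec02} for the full verification and present only this ODE-reduction sketch, emphasizing that cuspidality (the condition $a_{f,\mathfrak a}(0;0)=0$) is what will later force the vanishing or simplification of the $\kappa_1,\kappa_2$ terms when applied in the eigenvalue $\tfrac14$ setting used in the body of the paper.
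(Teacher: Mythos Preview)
Your proposal is correct and follows exactly the approach the paper indicates: the paper does not give a proof but simply remarks that the result is standard, may be found in \cite{Bump,Iwaniec02}, and follows from the differential equation (iii), the growth condition (ii), and the periodicity of $f$. Your sketch fills in precisely these details---Fourier-expand by periodicity, reduce $\Delta f=\lambda f$ to an ODE on each mode, identify the Bessel/Euler solutions, and discard the exponentially growing $I$-Bessel branch via the growth hypothesis---so there is nothing to compare.
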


  Cohen proved (in the notation of \eqref{Tofndef}) that the function
\begin{equation}\notag 
f(\tau)
:=
v^{\frac{1}{2}}\sum_{n\in1+24\Z}T(n)K_0\bigg(\frac{2\pi |n|{v}}{24}\bigg)e\bigg(\frac {nu}{24}\bigg)
\end{equation}
is a Maass form on the congruence subgroup $\Gamma_0(2)$ with a multiplier. Namely, $u$ satisfies the transformations
\begin{equation*}
f\bigg(\!-\frac1{2\tau}\bigg)=\overline{f(\tau)}, \qquad
f(\tau+1)=e\bigg(\frac1{24}\bigg)f(\tau),
\end{equation*}
and is an eigenfunction  of $\Delta$
with eigenvalue $1/4$.
Put another way, Cohen showed that
\[f^+(\tau)=\sigma(q),\]
(in the notation of \eqref{pluspart})
and that $\sigma^*$ similarly interprets the negative Fourier coefficients of $u$. Cohen's proof relies on connections between $\sigma,\sigma^*$ and the arithmetic of a quadratic field, which also forms the basis of investigations by many authors of the series discussed by Li, Ngo, and Rhoades in \cite{RobMaass}. However, as noted above, computing the Hecke characters related to such $q$-series using Cohen's methods quickly becomes computationally difficult. Instead, we use work of Zwegers which provides a convenient framework for giving examples of Maass forms and allows us to circumvent these problems.

\subsection{Work of Zwegers and related notation}\label{ZwegersWorkSection}
In this section we summarize the important recent work of Zwegers \cite{ZwegersMockMaass}, which allows us to study the relation between indefinite theta functions and Maass forms. Effectively, Zwegers showed for a large class of indefinite theta functions how to define eigenfunctions of $\Delta$, along with special completion terms which correct their (non)-modularity. What is especially useful in our case, is the theory which Zwegers provides for describing when these completion terms vanish. To describe the setup, suppose that $A$ is a symmetric $2\times 2$ matrix with integral coefficients such that the quadratic form $Q$ defined by $Q({r}) := \frac12 {r}^T A {r}$
is indefinite of signature $(1,1)$, where ${r}^T$ denotes the transpose of ${r}$.  Let $B({r},\mu)$ be the associated bilinear form given by
\begin{equation}\notag 
 B({r},\mu) := {r}^T A \mu = Q({r}+\mu)-Q(r)-Q(\mu),
\end{equation}
and  take vectors $c_1,c_2\in \R^2$ with $Q(c_j)=-1$ and $B(c_1,c_2)<0$.  In other words, we are assuming that $c_1$ and $c_2$ belong to the same one of the two components of the space of vectors $c$ satisfying $Q(c)=-1$.  We denote this choice of component by $$C_Q:=\{x \in \R^2\mid Q(x)=-1,B(x,c_1)<0\}.$$  It is easily seen that $Q$ splits over $\mathbb{R}$ as a product of linear factors $Q(r)=Q_0(Pr)$
for some (non-unique) $P\in \GL_2(\R)$, where $Q_0(r):=r_1r_2$ with $r=(r_1,r_2)$.
Note that $P$ satisfies $A=P^T(\begin{smallmatrix} 0&1\\1&0\end{smallmatrix})P$.
The choice of $P$ is not unique, however we fix a $P$ with sign chosen so that $P^{-1}\binom{ \hspace{2mm}1}{-1}\in C_Q$. Then, for each $c\in C_Q$, there is a unique $t \in \mathbb{R}$ such that
 \begin{equation}\label{eq:c(t)}
 c = c(t):=  P^{-1}\begin{pmatrix}  e^t \\ -e^{-t} \end{pmatrix}.
 \end{equation}
Additionally, for $c\in C_Q$ we let $c^\perp =c^\perp(t):=P^{-1}\colvector{e^t}{e^{-t}}$.  Note that $B(c,c^\perp)=0$, and $Q(c^\perp)=1$.  It is easily seen that these two conditions determine $c^\perp$ up to sign.

Set
\begin{align*}
\rho_A(r):=\rho_{A}^{c_1,c_2}(r):=&\frac{1}{2} \Big(1-\sign \big(B(r,c_1)B(r,c_2)\big) \Big)
,
\end{align*}
and for convenience, let
$\rho_A^{\perp}:=\rho_A^{c_1^{\perp},c_2^{\perp}}$.
Then, for  $c_j=c(t_j)\in C_Q$, Zwegers defined the function
\begin{equation*} \label{Phidef}
\begin{aligned}
 \Phi_{a,b}(\tau)=\Phi_{a,b}^{c_1,c_2}(\tau)  :&= \sign(t_2-t_1) v^{\frac{1}{2}} \sum_{r \in a+\Z^2} \rho_A(r) e( Q(r)u+ B(r,b))K_0(2\pi Q(r)v) \\
 & \quad + \sign(t_2-t_1) v^{\frac{1}{2}} \sum_{r \in a+\Z^2} \rho_A^\perp (r) e( Q(r)u+B(r,b))K_0(-2\pi Q(r)v)
 .
\end{aligned}
\end{equation*}
Note in particular that
\begin{equation*}
\label{plus}
\Phi_{a, b}^+(\tau)=\sign(t_2-t_1) \sum_{r \in a+\Z^2} \rho_A(r) e(B(r,b))q^{Q(r)}.
\end{equation*}
Here we used that in the proof of convergence in \cite{ZwegersMockMaass} it is shown that for the first sum in the definition of $\Phi_{a,b}$, $Q$ is positive definite whereas in the second sum $Q$ is negative definite from.
Given convergence of this series, it is immediate from the differential equation satisfied by $K_0$ that $\Phi_{a,b}$ is an eigenfunction of the Laplace operator $\Delta$ with eigenvalue $1/4$.

Zwegers then found a completion of $\Phi_{a,b}$. Moreover, he gave useful conditions to determine when the extra completion term vanishes.  To describe this, we first consider for $c\in C_Q$ the $q$-series
 \[
 \varphi_{a,b}^c(\tau) := {v}^{\frac{1}{2}}\sum_{{r}\in a+\Z^2} \alpha_{t} \big({r} {v}^{\frac{1}{2}} \big) q^{Q({r})}e(B({r},b))
 ,
 \]
 $t$ as defined in \eqref{eq:c(t)} and
 \[
 \alpha_{t}({r}):=
\begin{cases} \displaystyle{\int_{t}^\infty} e^{-\pi B({r},c(x))^2}dx & \mbox{ if }B({r},c)B\big({r},c^\perp\big)>0, \\[2ex]
 -\displaystyle{\int_{-\infty}^{t}} e^{-\pi B({r},c(x))^2}dx & \mbox{ if }B({r},c)B\big({r},c^\perp \big)<0, \\
 0 & \mbox{ otherwise, }
 \end{cases}
%
 \]
These functions satisfy the following transformation properties.
\begin{lemma}\label{Zlem}
For $c\in C_Q$ and $a,b\in\R^2$, we have that
\begin{align*}
\varphi_{a+\lambda,b+\mu}^c &= e(B(a,\mu))\varphi_{a,b}^c\quad\mbox{for all $\lambda\in \Z^2$ and }\mu\in A^{-1}\Z^2, \\
\varphi_{-a,-b}^c &= \varphi_{a,b}^c, \\
\varphi_{\gamma a,\gamma b}^{\gamma c} &= \varphi_{a,b}^c \quad \mbox{for all }\gamma\in \mathrm{Aut}^+(Q,\Z),
\end{align*}
where
 \[ \mathrm{Aut}^+(Q,\Z):=\big\{\gamma \in \operatorname{GL}_2(\R)\big\vert \gamma\circ Q=Q,\gamma\Z^2=\Z^2,  \gamma(C_Q)=C_Q, \det(\gamma)=1\big\}. \]
\end{lemma}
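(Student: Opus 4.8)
The plan is to prove all three identities by reindexing the lattice sum in the definition of $\varphi_{a,b}^c$, using only that $Q$ is a quadratic form and $B$ a symmetric bilinear form, that the coset $a+\Z^2$ depends on $a$ only modulo $\Z^2$, and that each $\gamma\in\mathrm{Aut}^+(Q,\Z)$ satisfies $\gamma^TA\gamma=A$, $\gamma\Z^2=\Z^2$, and acts compatibly with the parametrization $t\mapsto c(t)$ of $C_Q$. All the manipulations below take place in the region where the relevant series converge (as established in \cite{ZwegersMockMaass}), so they may be carried out term by term.

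First, since $\lambda\in\Z^2$ we have $a+\lambda+\Z^2=a+\Z^2$, and no ingredient of the summand of $\varphi_{\cdot,b}^c$ depends on $a$ except through the summation variable, so $\varphi_{a+\lambda,b}^c=\varphi_{a,b}^c$. For $\mu\in A^{-1}\Z^2$, write $r=a+n$ with $n\in\Z^2$; then $B(r,b+\mu)=B(r,b)+B(a,\mu)+n^TA\mu$, and $n^TA\mu\in\Z$ because $A\mu\in\Z^2$, so $e(B(r,b+\mu))=e(B(a,\mu))\,e(B(r,b))$ and the constant $e(B(a,\mu))$ factors out of the sum; combining the two observations gives the first identity. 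For the second identity, $r\mapsto-r$ is a bijection of $-a+\Z^2$ onto $a+\Z^2$, and $Q(-r)=Q(r)$, $e(B(-r,-b))=e(B(r,b))$, while $\alpha_t(-rv^{1/2})=\alpha_t(rv^{1/2})$ since both the integrand $e^{-\pi B(rv^{1/2},c(x))^2}$ and the sign conditions $\sign\!\big(B(r,c)B(r,c^\perp)\big)$ defining $\alpha_t$ are even in $r$.

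For the third identity the crux is the action of $\gamma\in\mathrm{Aut}^+(Q,\Z)$ on $C_Q$. Writing $Q=Q_0\circ P$ with $Q_0(r)=r_1r_2$, the matrix $M:=P\gamma P^{-1}$ preserves $Q_0$; the conditions $\det\gamma=1$ and $\gamma(C_Q)=C_Q$ then force $M=\mathrm{diag}(e^{s},e^{-s})$ for a unique $s=s(\gamma)\in\R$, whence $\gamma\,c(x)=c(x+s)$ and $\gamma\,c^\perp(x)=c^\perp(x+s)$; in particular $(\gamma c)^\perp=\gamma(c^\perp)$, and $\gamma c=c(t+s)$ when $c=c(t)$. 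Also $\gamma^TA\gamma=A$ (the matrix form of $Q\circ\gamma=Q$) gives $B(\gamma r,\gamma\mu)=B(r,\mu)$ for all $r,\mu$. Now in $\varphi_{\gamma a,\gamma b}^{\gamma c}$ substitute $r=\gamma r'$ with $r'\in a+\Z^2$, which is valid since $\gamma a+\Z^2=\gamma(a+\Z^2)$: then $Q(\gamma r')=Q(r')$, $e(B(\gamma r',\gamma b))=e(B(r',b))$, and, setting $w=r'v^{1/2}$, we have $B(\gamma w,c(x))=B(\gamma w,\gamma c(x-s))=B(w,c(x-s))$ and $B(\gamma w,(\gamma c)^\perp)=B(w,c^\perp)$, so the case disjunction in $\alpha$ is preserved and the substitution $x\mapsto x-s$ in the integral over $[t+s,\infty)$ (resp.\ $(-\infty,t+s]$) converts the $\alpha$-factor of $\varphi_{\gamma a,\gamma b}^{\gamma c}$ at $\gamma w$ into that of $\varphi_{a,b}^c$ at $w$. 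Hence the two series agree term by term.

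The only part requiring care is the third: identifying the induced action of $\gamma$ on the $t$-line as the shift $t\mapsto t+s(\gamma)$, checking that the fixed sign convention for $c^\perp$ is respected so that $(\gamma c)^\perp=\gamma(c^\perp)$ rather than its negative (this is exactly where $\det\gamma=1$ and $\gamma(C_Q)=C_Q$ enter), and matching the limits of the Gaussian integrals after the change of variables. The first two identities follow at once from the evenness of $Q$, $B$, and the sign conditions, together with the integrality $n^TA\mu\in\Z$ for $n\in\Z^2$, $A\mu\in\Z^2$.
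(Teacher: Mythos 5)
Your proof is correct; the paper itself states this lemma without proof, quoting it from Zwegers' work \cite{ZwegersMockMaass}, and your direct term-by-term reindexing (including the key identification $P\gamma P^{-1}=\mathrm{diag}(e^{s},e^{-s})$ so that $\gamma$ acts as the shift $t\mapsto t+s$ on the parametrization of $C_Q$, preserving the sign convention for $c^{\perp}$) is exactly the standard verification given there. All the checks — the integrality $n^{T}A\mu\in\Z$, the evenness under $r\mapsto -r$, and the change of variables in the Gaussian integrals — are carried out correctly.
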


Zwegers' main result is as follows,
where
\begin{equation}\label{ZwegersPhiHatDefn}
\widehat\Phi_{a,b}(\tau)=\widehat\Phi_{a,b}^{c_1,c_2}(\tau):=v^{\frac12}\sum_{r\in a+\Z^2}q^{Q(r)}e(B(r,b))\int_{t_1}^{t_2}e^{-\pi vB(r,c(x))^2}dx
.
\end{equation}

\begin{theorem}\label{Zthm}
The function $\Phi_{a,b}$ converges absolutely for any choice of parameters $a,b$, and $Q$ such that $Q$ is non-zero on $a+\Z^2$. Moreover, the function $\widehat \Phi_{a,b}$ converges absolutely and can be decomposed as
\begin{equation}\notag 
 \widehat{\Phi}^{c_1,c_2}_{a,b} = \Phi^{c_1,c_2}_{a,b}+\varphi_{a,b}^{c_1}-\varphi_{a,b}^{c_2}
 .
\end{equation}
Moreover, it satisfies the elliptic transformations
 \begin{align*}
\widehat{\Phi}^{c_1,c_2}_{a+\lambda,b+\mu} &= e(B(a,\mu))\widehat{\Phi}^{c_1,c_2}_{a,b}\quad \mbox{for all $\lambda\in \Z^2$ and }\mu\in A^{-1}\Z^2,\\
\widehat{\Phi}^{c_1,c_2}_{-a,-b} &= \widehat{\Phi}^{c_1,c_2}_{a,b},
\end{align*}
and the modular relations
\begin{align*}
  \widehat{\Phi}^{c_1,c_2}_{a,b}(\tau+1) & = e\bigg(-Q(a)-\frac12 B\big(A^{-1}A^*,a\big)\bigg)\widehat{\Phi}^{c_1,c_2}_{a,a+b+\frac12 A^{-1}A^*}(\tau), \\
  \widehat{\Phi}^{c_1,c_2}_{a,b}\bigg(-\frac{1}{\tau}\bigg) & = \frac{e(B(a,b))}{\sqrt{-\det{(A)}}} \sum_{p\in A^{-1}\Z^2\pmod{\Z^2}} \widehat{\Phi}^{c_1,c_2}_{-b+p,a}(\tau),
\end{align*}
where $A^*:=(A_{11},\ldots,A_{rr})^{\mathrm T}$.
\end{theorem}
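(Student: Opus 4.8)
The plan is to establish the four assertions in turn: absolute convergence of $\Phi_{a,b}$; absolute convergence of $\widehat\Phi_{a,b}$ together with the splitting $\widehat\Phi_{a,b}=\Phi_{a,b}+\varphi_{a,b}^{c_1}-\varphi_{a,b}^{c_2}$; the elliptic transformations; and finally the two modular relations. Throughout I would pass to the coordinates $y=Pr$, so that $Q(r)=y_1y_2$, $B(r,c(x))=-y_1e^{-x}+y_2e^{x}$, $B(r,c^\perp(x))=y_1e^{-x}+y_2e^{x}$, and the elementary identity $B(r,c(x))^2=y_1^2e^{-2x}+y_2^2e^{2x}-2Q(r)$ holds. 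Two consequences are used constantly: first, $\bigl|q^{Q(r)}e^{-\pi vB(r,c(x))^2}\bigr|=e^{-\pi v(y_1^2e^{-2x}+y_2^2e^{2x})}$, a genuine positive-definite Gaussian in $y$; and second, the substitution $2x+\log(y_2/y_1)=\theta$ gives $\int_{-\infty}^{\infty}e^{-\pi vB(r,c(x))^2}\,dx=e^{2\pi vQ(r)}K_0\bigl(2\pi v|Q(r)|\bigr)$ in all cases. Note that $Q$ nonzero on $a+\Z^2$ means $y_1y_2\neq0$ throughout.

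For the convergence of $\Phi_{a,b}$ I would analyze the supports of $\rho_A$ and $\rho_A^{\perp}$. Since $x\mapsto B(r,c(x))=-y_1e^{-x}+y_2e^{x}$ has constant sign exactly when $y_1,y_2$ have opposite signs, $\rho_A(r)\neq0$ forces $Q(r)=y_1y_2>0$ and then forces the zero $\tfrac12\log(y_1/y_2)$ of this function to lie (weakly) between $t_1$ and $t_2$; this confines the ratio $y_1/y_2$ to a fixed compact subinterval of $(0,\infty)$, whence $Q(r)\asymp\|r\|^2$ on the support of $\rho_A$, and in particular $Q(r)$ stays bounded away from $0$ there. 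The mirror argument puts $\rho_A^\perp$ on a subcone of $\{Q<0\}$ on which $|Q(r)|\asymp\|r\|^2$. As $K_0$ decays exponentially, $K_0(\pm2\pi Q(r)v)\ll e^{-c\|r\|^2v}$ on the respective supports, so $\Phi_{a,b}$ converges absolutely. For $\widehat\Phi_{a,b}$ I would instead interchange the compact integral $\int_{t_1}^{t_2}dx$ with the lattice sum (Tonelli) and observe that for each fixed $x$ the sum $\sum_{r\in a+\Z^2}e^{-\pi v(y_1^2e^{-2x}+y_2^2e^{2x})}$ is a convergent positive-definite theta series, bounded uniformly for $x\in[t_1,t_2]$; absolute convergence of $\widehat\Phi_{a,b}$ follows at once.

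Next, the splitting. Assuming $t_1<t_2$ (the case $t_1\geq t_2$ follows from the antisymmetry of all three sides under $c_1\leftrightarrow c_2$), I would compare the coefficients of $v^{1/2}e(Q(r)u+B(r,b))$ for each $r\in a+\Z^2$. After removing that common factor, all three functions reduce, in the $r$-slot, to integrals of the single Gaussian $G_r(x):=e^{-\pi v(y_1^2e^{-2x}+y_2^2e^{2x})}$: $\widehat\Phi$ contributes $\int_{t_1}^{t_2}G_r$; $\varphi_{a,b}^{c_j}$ contributes $\pm\int G_r$ over the half-line $[t_j,\infty)$ or $(-\infty,t_j]$, with sign and endpoint fixed by $\sign\bigl(B(r,c_j)B(r,c_j^\perp)\bigr)$ through the definition of $\alpha_{t_j}$; and $\Phi$ contributes $0$, $\int_{-\infty}^{\infty}G_r=K_0(2\pi v|Q(r)|)$, or (in the boundary case $t_j=\tfrac12\log|y_1/y_2|$) $\tfrac12\int_{-\infty}^{\infty}G_r$, according to whether $\tfrac12\log|y_1/y_2|$ lies outside, strictly inside, or at an endpoint of $[t_1,t_2]$. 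A short case check on $\sign(y_1)$, $\sign(y_2)$ and on the position of $t_1,t_2$ relative to $\tfrac12\log|y_1/y_2|$ then yields the identity; the boundary case closes because $G_r$ is even about $\tfrac12\log|y_1/y_2|$ — explicitly $G_r\bigl(\tfrac12\log|y_1/y_2|+w\bigr)=e^{-2\pi v|Q(r)|\cosh(2w)}$ — which exactly balances the value $\tfrac12$ taken there by $\rho_A$ or $\rho_A^\perp$. The elliptic transformations of $\widehat\Phi_{a,b}$ are then bookkeeping: $a\mapsto-a$, $b\mapsto-b$ sends $r\mapsto-r$ and leaves $Q(r)$, $B(r,b)$ and the integral fixed; translating $a$ by $\lambda\in\Z^2$ re-indexes the sum; translating $b$ by $\mu\in A^{-1}\Z^2$ multiplies the $r$-term by $e(B(r,\mu))=e(B(a,\mu))e(B(r-a,\mu))=e(B(a,\mu))$ since $B(n,\mu)=n^{\mathrm T}A\mu\in\Z$ for $n\in\Z^2$.

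Finally the modular relations. For $\tau\mapsto\tau+1$ one needs only $Q(r)\bmod1$ for $r\in a+\Z^2$: with $r=a+n$ one has $Q(r)=Q(a)+B(a,n)+Q(n)$ and $Q(n)\equiv\tfrac12(A^{*})^{\mathrm T}n=B\bigl(\tfrac12A^{-1}A^{*},n\bigr)\pmod1$ (as $A$ is integral and $n_i^2\equiv n_i\bmod2$); rearranging gives $Q(r)\equiv-Q(a)-\tfrac12B(A^{-1}A^{*},a)+B\bigl(a+\tfrac12A^{-1}A^{*},r\bigr)\pmod1$, and substituting this into $e(Q(r))$ — with $v$, hence the inner integral, untouched — produces exactly the stated scalar and the shift $b\mapsto a+b+\tfrac12A^{-1}A^{*}$. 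The inversion $\tau\mapsto-1/\tau$ is the crux and is where I expect the genuine work to lie. For each fixed $x\in[t_1,t_2]$ the summand, as a function of $r\in\R^2$, is the unimodular oscillation $e^{2\pi iuQ(r)}e^{2\pi iB(r,b)}$ times the Schwartz Gaussian $e^{-\pi v(y_1^2e^{-2x}+y_2^2e^{2x})}$, so Poisson summation over the coset $a+\Z^2$ applies; the Gaussian Fourier transform is explicit, and since the $B$-dual of $\Z^2$ is $A^{-1}\Z^2$, reassembling the dual sum produces the factor $1/\sqrt{-\det A}$ and the sum over $A^{-1}\Z^2\pmod{\Z^2}$, whereupon one recognizes the outcome at $\tau$ as $\sum_p\widehat\Phi_{-b+p,\,a}(\tau)$. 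The delicate point is to justify interchanging $\int_{t_1}^{t_2}dx$ with the Poisson-dual summation, i.e.\ to exhibit a dominating function uniform in $x\in[t_1,t_2]$; equivalently one can avoid the explicit transform by a Vign\'eras-type argument, checking that the majorant $e^{-\pi vB(r,c(x))^2+2\pi vQ(r)}$ together with the phase $e^{2\pi iuQ(r)}$ satisfies the relevant second-order differential equation with the eigenvalue attached to weight $1$, uniformly in $x$, and then integrating the resulting transformation law over $x\in[t_1,t_2]$.
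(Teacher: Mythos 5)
The paper does not prove this theorem at all: it is quoted from Zwegers \cite{ZwegersMockMaass}, and the surrounding section is explicitly a summary of his work. So your sketch is really a reconstruction of Zwegers' original argument, and as such it is essentially faithful and, in the parts you spell out, correct. In the coordinates $y=Pr$ one indeed has $B(r,c(x))=y_2e^{x}-y_1e^{-x}$, so $\rho_A$ (resp.\ $\rho_A^{\perp}$) is supported where $Q(r)=y_1y_2>0$ (resp.\ $<0$) and $t_0:=\tfrac12\log|y_1/y_2|$ lies in $[t_1,t_2]$; this pins $|Q(r)|\asymp\|r\|^2$ on the support and the exponential decay of $K_0$ gives absolute convergence of $\Phi_{a,b}$, while Tonelli over the compact $x$-interval handles $\widehat\Phi_{a,b}$. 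The identity $\int_{\R}e^{-\pi vB(r,c(x))^2}dx=e^{2\pi vQ(r)}K_0(2\pi v|Q(r)|)$ correctly reduces every $r$-term of $\widehat\Phi-\Phi-\varphi^{c_1}+\varphi^{c_2}$ to signed integrals of the single Gaussian $G_r$ over intervals determined by the position of $t_0$ relative to $t_1,t_2$, and your case check closes in all configurations, including the boundary case where the evenness of $G_r$ about $t_0$ exactly compensates the value $\tfrac12$ of $\rho$ (coming from $\sign(0)=0$). The elliptic transformations and the $\tau\mapsto\tau+1$ law, via $Q(n)\equiv B(\tfrac12A^{-1}A^*,n)\pmod1$, are also right.

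The one place where your argument remains genuinely a sketch is the inversion formula, and you correctly flag it. Two reassurances: first, in the split coordinates $w_{\pm}$ the fixed-$x$ summand has the form $e^{\pi i\tau w_+^2-\pi i\bar\tau w_-^2}$ times phases, so Poisson summation at fixed $x$ returns an expression of the same shape at $-1/\tau$ with the \emph{same} $x$ (no shift of the majorant parameter is needed in this normalization), with the Jacobian supplying $(-\det A)^{-1/2}$ and the $B$-dual lattice $A^{-1}\Z^2$ supplying the sum over $p$; second, the interchange of $\int_{t_1}^{t_2}dx$ with the dual sum is dominated by exactly the uniform positive-definite theta bound you already invoked for the convergence of $\widehat\Phi$. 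This is in substance how Zwegers proceeds (he packages it as modularity of an integral of signature-$(1,1)$ theta functions), so nothing essential is missing from your outline; for a citable complete argument one should simply defer to \cite{ZwegersMockMaass}, as the paper does.
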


These results can be conveniently repackaged in the language of Theorem \ref{mainthm2} as follows.  We note that the addition of the transformation results involving $(a^*,b^*)$ is based on the discussion of the proof of (14) in \cite{ZwegersMockMaass}.
For future reference, we also
define the equivalence relation on $\R^2$
\[(a,b)\sim(\alpha,\beta)\] if $a\pm \alpha\in\Z^2$ and $b\pm \beta=:\mu\in\Z^2$ with $B(a,\mu)\in\Z$ (note that the two $\pm$ are required to have the same sign). 

\begin{proposition}\label{MainThm2ZwegersResult}
If $a,b$ are chosen so that $(\gamma a,\gamma b)\sim(a,b)$ for some $\gamma\in \mathrm{Aut}^+(Q,\Z)$ with $\gamma c_1=c_2$, then
\[
\widehat{\Phi}^{c_1,c_2}_{a,b} = \Phi^{c_1,c_2}_{a,b}
.
\]
 In particular, it is a Maass form (with a multiplier) on a congruence subgroup.
\end{proposition}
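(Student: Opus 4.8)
The plan is to deduce Proposition \ref{MainThm2ZwegersResult} directly from Theorem \ref{Zthm} and Lemma \ref{Zlem}, the point being that under the stated hypothesis all the correction terms $\varphi^{c_j}_{a,b}$ cancel. First I would use the decomposition from Theorem \ref{Zthm},
\[
\widehat{\Phi}^{c_1,c_2}_{a,b} = \Phi^{c_1,c_2}_{a,b} + \varphi^{c_1}_{a,b} - \varphi^{c_2}_{a,b},
\]
so that proving the claimed identity amounts to showing $\varphi^{c_1}_{a,b} = \varphi^{c_2}_{a,b}$. Since $\gamma\in\mathrm{Aut}^+(Q,\Z)$ with $\gamma c_1 = c_2$, the third transformation in Lemma \ref{Zlem} gives $\varphi^{c_2}_{\gamma a,\gamma b} = \varphi^{\gamma c_1}_{\gamma a,\gamma b} = \varphi^{c_1}_{a,b}$. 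So it suffices to show that $\varphi^{c_2}_{\gamma a, \gamma b} = \varphi^{c_2}_{a,b}$, i.e.\ that $\varphi^{c_2}$ is unchanged when $(a,b)$ is replaced by an equivalent pair in the sense of the relation $\sim$ defined just above the proposition.

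Next I would verify that $\sim$-equivalence is exactly what is needed to preserve $\varphi^c$ using the first two transformations in Lemma \ref{Zlem}. Concretely, suppose $(\gamma a,\gamma b)\sim (a,b)$, so that either $\gamma a - a =: \lambda\in\Z^2$ and $\gamma b - b =: \mu\in\Z^2$ (with $B(a,\mu)\in\Z$), or the same with a sign flip on one argument. In the first case, provided $\mu\in A^{-1}\Z^2$ — which holds since $\mu\in\Z^2\subseteq A^{-1}\Z^2$ when $A$ has integer entries, though one should double-check that $A^{-1}\Z^2\supseteq\Z^2$; in fact the relevant inclusion is $\Z^2\subseteq A^{-1}\Z^2$ because $A\Z^2\subseteq\Z^2$ — the first identity of Lemma \ref{Zlem} gives $\varphi^{c_2}_{\gamma a,\gamma b} = \varphi^{c_2}_{a+\lambda,b+\mu} = e(B(a,\mu))\varphi^{c_2}_{a,b} = \varphi^{c_2}_{a,b}$, the last step using $B(a,\mu)\in\Z$. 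In the sign-flipped case one first applies the second identity $\varphi^{c_2}_{-a,-b} = \varphi^{c_2}_{a,b}$ and then the first, reducing to the previous computation. Stringing these together yields $\varphi^{c_1}_{a,b} = \varphi^{c_2}_{\gamma a,\gamma b} = \varphi^{c_2}_{a,b}$, hence $\widehat{\Phi}^{c_1,c_2}_{a,b} = \Phi^{c_1,c_2}_{a,b}$.

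Finally, for the ``in particular'' clause, I would invoke the elliptic and modular transformation laws for $\widehat\Phi_{a,b}$ listed in Theorem \ref{Zthm}: under $\tau\mapsto\tau+1$ and $\tau\mapsto -1/\tau$ the functions $\widehat\Phi_{a,b}$ transform among themselves (with the index set $p\in A^{-1}\Z^2/\Z^2$ being finite), so the finite-dimensional span of the relevant $\widehat\Phi$'s is preserved, and a suitable linear combination — or after passing to the congruence subgroup on which the $a$-index is fixed modulo $\Z^2$, the single function itself — is modular with a multiplier on that congruence subgroup. Since we have just shown $\widehat\Phi_{a,b} = \Phi_{a,b}$, the same holds for $\Phi_{a,b}$, which is already known to be a $\Delta$-eigenfunction with eigenvalue $1/4$ (and has at most polynomial growth at the cusps, being built from $K_0$-Bessel functions); hence it is a Maass form with a multiplier on a congruence subgroup.

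I expect the main obstacle to be the bookkeeping in the second paragraph: getting the $\pm$ conventions in the definition of $\sim$ to line up correctly with Lemma \ref{Zlem}, and in particular confirming that the lattice condition ``$\mu\in A^{-1}\Z^2$'' required by Lemma \ref{Zlem} is automatically met by a $\mu\in\Z^2$ (this hinges on $A$ having integer entries, so $A\Z^2\subseteq\Z^2$ and thus $\Z^2\subseteq A^{-1}\Z^2$), together with tracking the phase $e(B(a,\mu))$ and using $B(a,\mu)\in\Z$ to kill it. The ``congruence subgroup'' assertion at the end also needs a small argument — identifying the explicit level from the denominators of $a$ and the $\tau\mapsto\tau+1$ phase $e(-Q(a) - \tfrac12 B(A^{-1}A^*,a))$ — but this is standard and parallels Cohen's $\Gamma_0(2)$ example reviewed in Section \ref{MaassFormsSctn}.
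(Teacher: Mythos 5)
Your argument is correct and is precisely the one the paper intends: the paper presents this proposition without proof, as an immediate "repackaging" of Theorem \ref{Zthm} and Lemma \ref{Zlem}, and your chain $\varphi^{c_1}_{a,b}=\varphi^{\gamma c_1}_{\gamma a,\gamma b}=\varphi^{c_2}_{\gamma a,\gamma b}=\varphi^{c_2}_{a,b}$ (using $\Z^2\subseteq A^{-1}\Z^2$ and $B(a,\mu)\in\Z$ to kill the phase) is exactly the intended cancellation of the completion terms. The verification of the two sign cases of $\sim$ and the modularity via the transformation laws of $\widehat\Phi$ are all in order.
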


The connection of the series $S_{a,b;M}$ with Maass forms (once these series are decorated with the proper modified Bessel functions of the second kind) follows from Zwegers' work, given certain special conditions. To describe these, we define an equivalence relation on the set of pairs $(a,b)$. We also set
\begin{equation}\label{gm}
\gamma_M:=\bigg(\begin{matrix}M&M-1\\ M+1&M\end{matrix}\bigg),
\end{equation}
which is useful for our purposes as it lies in $\mathrm{Aut}^+(Q,\Z)$ and satisfies $\gamma_M c=c'$. 

Finally, for a generic vector $x=(x_1,x_2)$, we let
$$
x^*:=(-x_1,x_2).
$$

\subsection{Quantum modular forms and the map $F\mapsto F^+$}\label{MaassFormsHolomorphization}

In this section, we review Lewis and Zagier's construction \cite{LewisZagier2} of period functions for Maass waveforms, and following Zagier \cite{Za1} indicate how so-called quantum modular forms may be formed using them. We also use this construction in the proof of Theorem \ref{mainthm1}, as we shall see that the $q$-hypergeometric forms of the associated quantum modular forms are essential for showing cuspidality of the Maass waveforms.

We begin by recalling the definition of quantum modular forms (see \cite{Za1} for a general survey).

\begin{definition}\label{cocycle}
For any subset $X\subseteq\mathbb P^1(\Q)$, a function $f\colon X\rightarrow\C$ is a {\it quantum modular form} with {\it quantum set $X$} of weight $k\in\frac12\Z$ on a congruence subgroup $\Gamma$  if for all $\gamma\in\Gamma$, the cocycle ($|_k$ the usual slash operator)
\[r_{\gamma}(x):=f|_{k}(1-\gamma)(x)\]

\noindent extends to an open subset of\, $\R$ and is real-analytic.

\end{definition}

\begin{remark}
Zagier left his definition of quantum modular forms more open only requiring for $r_\gamma$ to be ``nice''. In general, one knows one is dealing with a quantum modular form if it has a certain feel, which Zagier brilliantly explained in his several motivating examples.
\end{remark}
The first main  example Zagier gave, and the one most relevant for us here, is that of quantum modular forms attached to the positive (and negative) coefficients of Maass forms. Although Zagier only worked out this example explicitly in one case, and the work of Lewis and Zagier only studied Maass cusp forms of level one, for our purposes it is important to consider a more general situation.  This is described in the following result, which extends observations of Lewis and Zagier for Maass Eisenstein series of Li, Ngo, and Rhoades for special examples in \cite{RobMaass},
and where, for a Maass form $F$ on a congruence subgroup $\Gamma$, we set
\[
\Gamma_F:=\Gamma\cap\big\{\gamma\in\Gamma : F \text{ is cuspidal at } \gamma^{-1}i\infty\big\}
.
\]

\begin{theorem}\label{MaassQMFThm}
Let $F$ be a Maass waveform on a congruence subgroup $\Gamma$ with eigenvalue $1/4$ under $\Delta$ which is cuspidal at $i\infty$. Then $F^+$ defines a quantum modular form of weight one on a subset $X\subseteq\mathbb P^1(\Q)$ on $\Gamma_F$. Moreover, $F$ is cuspidal exactly at those cusps which lie in the maximal such set $X$.
\end{theorem}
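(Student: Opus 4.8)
The plan is to build the period function for $F$ directly from its Fourier expansion and then read off quantum modularity and the cuspidality dictionary from its analytic properties. First I would recall, following Lewis--Zagier \cite{LewisZagier2}, that if $F(\tau)=v^{1/2}\sum_{n\neq0}A(n)K_0(2\pi|n|v/N)e(nu/N)$ is cuspidal at $i\infty$, then the associated $L$-function-type object can be packaged so that $F^+(\tau)=\sum_{n>0}A(n)q^{n/N}$ extends from $\H$ to a function holomorphic on $\C\setminus(-\infty,0]$, and in fact the combination $F^+(\tau)+(\text{suitable integral transform of }F)$ has a one-sided analytic continuation across the real axis. Concretely, one writes, for $\tau\in\H$,
\[
\psi_F(\tau):=F^+(\tau)-\big(\text{Eichler-type integral of }F \text{ along }[\tau,i\infty]\big),
\]
and checks that $\psi_F$ extends real-analytically to $\R\setminus\{0\}$; this is the content of the Lewis--Zagier correspondence adapted to eigenvalue $1/4$ and weight one, where the relevant Bessel-function identities replace the classical ones. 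The eigenvalue being exactly $1/4$ (so $s=1/2$) is what forces weight $k=2s=1$.

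Next I would exploit the modularity of $F$ itself. For $\gamma=\smatr{a}{b}{c}{d}\in\Gamma$, invariance $F(\gamma\tau)=F(\tau)$ translates, via the integral representation of the ``completion'' of $F^+$, into a cocycle relation
\[
r_\gamma(x):=\big(F^+|_1(1-\gamma)\big)(x)=F^+(x)-(cx+d)^{-1}F^+(\gamma x),
\]
and the key point is that $r_\gamma$ equals an explicit period integral of $F$ along the geodesic from $\gamma^{-1}i\infty$ to $i\infty$ (or equivalently a contour in $\H$ joining $x$ to $\gamma^{-1}i\infty$). Because $F$ is smooth on $\H$ and decays at $i\infty$, this integral converges and defines a real-analytic function of $x$ on any interval avoiding $\gamma^{-1}i\infty$; its only possible singularity is at $x=\gamma^{-1}i\infty$, precisely the endpoint where the contour meets the boundary. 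This simultaneously establishes the quantum modularity on the set $\Gamma_F$ and shows $r_\gamma$ is real-analytic on $\R\setminus\{\gamma^{-1}i\infty\}$, which is also what Theorem~\ref{mainthm3} will need.

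For the final assertion---that $F$ is cuspidal exactly at the cusps lying in the maximal quantum set $X$---I would argue both inclusions. If $F$ is cuspidal at a cusp $\mathfrak{a}=\gamma^{-1}i\infty$, then near $\mathfrak{a}$ the Fourier expansion of $F$ (Lemma~\ref{Maass0Fourier}) has vanishing $\kappa_1,\kappa_2$ terms, so the period integral defining $r_\gamma$, and hence $F^+$ itself pulled back through $\gamma$, converges as $x\to\mathfrak{a}$: thus $\mathfrak{a}\in X$ and $F^+$ extends there. Conversely, if $F$ is \emph{not} cuspidal at $\mathfrak{a}$, the nonzero constant term $\kappa_1 v^{1/2}+\kappa_2 v^{1/2}\log v$ (with $s=1/2$) produces a genuine divergence---a logarithmic/polynomial blow-up---in the Eichler integral as the contour approaches $\mathfrak{a}$, so no real-analytic extension of $r_\gamma$ across $\mathfrak{a}$ exists and $\mathfrak{a}\notin X$. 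I expect the main obstacle to be the careful analysis at the boundary in this borderline spectral case: because $s=1/2$ the two solutions $v^{s}$ and $v^{1-s}$ of the radial equation coincide and a $\log v$ appears, so the standard Lewis--Zagier estimates (which are cleanest for $s\neq1/2$ and for level one) must be redone, tracking the $\log$ term through the integral transform to confirm it is exactly the obstruction to continuation; handling general congruence subgroups with multipliers, rather than $\SL_2(\Z)$, adds bookkeeping at the several inequivalent cusps but no essentially new difficulty.
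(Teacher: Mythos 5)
Your proposal follows essentially the same route as the paper: realize $F^+$ as the Lewis--Zagier Green's form integral $-\frac{2}{\pi}\int_{\tau}^{i\infty}[F(z),R_{\tau}(z)]$, use closedness of that one-form together with the equivariance of $R_{\tau}$ to identify $r_{\gamma}$ with the period integral from $\gamma^{-1}i\infty$ to $i\infty$ (real-analytic off $\gamma^{-1}i\infty$), and then decide convergence of $\lim_{t\to0^+}F^+(\alpha+it)$ by feeding the cuspidal Fourier expansion at $\alpha$ (with the $s=1/2$ term $\kappa_1 v^{1/2}+\kappa_2 v^{1/2}\log v$) into the integral transform, finding that the limit exists precisely when $\kappa_1=\kappa_2=0$. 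The paper carries out exactly the boundary analysis you flag as the main obstacle, so the proposal is correct and matches the paper's argument.
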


\begin{remarks}\noindent
\begin{enumerate}[leftmargin=*,align=left]
\item The quantum modular form defined by $F^+$ may formally be given on all of $\mathbb P^1(\Q)$. This is done by considering asymptotic expansions of $F^+$ near the cusps, instead of simply values. This consideration leads to Zagier's notion of a {\it strong quantum modular form}.
\item There is also a quantum modular form associated to the negative coefficients of $F$, which is also a part of the object corresponding to $F$ under the Lewis-Zagier correspondence of \cite{LewisZagier2}.
\end{enumerate}
\end{remarks}
\begin{proof}[Sketch of proof of Theorem \ref{MaassQMFThm}]
The key idea, already present in \cite{LewisZagier2}, is to realize $F^+$ as an integral transform of $F$ defined in \eqref{pluspart}. To describe this, we require the real-analytic function $R_{\tau}$
given by ($z=x+iy$ with $x,y\in\R$)
\[
R_{\tau}(z):=\frac{y^{\frac12}}{\sqrt{(x-\tau)^2+y^2}}
.
\]
This function is an eigenfunction of $\Delta$ with eigenvalue $1/4$. For
two real-analytic functions $f,g$ defined on $\mathbb H$, we also consider their Green's form
\[[f,g]:=\frac{\partial f}{\partial z}gdz+\frac{\partial g}{\partial \overline{z}}fd\overline{z}.\]

Then Lewis and Zagier showed (see also Proposition 3.5 of \cite{RobMaass} for a direct statement and a detailed proof) that
\[
F^+(\tau)=-\frac 2{\pi}\int_{\tau}^{i\infty}\big[F(z),R_{\tau}(z)\big]
.
\]
This formula, which may also be thought of as an Abel transform, can also be rephrased as in the proposition of Chapter II, Section 2 of \cite{LewisZagier2} in the
following convenient form:
\begin{equation}\label{FPlusAltInt}
F^+(\tau)\
=
\mathcal C\int_{\tau}^{i\infty}\Bigg(
\frac{\partial F(z)}{\partial z}\frac{y^{\frac12}}{(z-\tau)^{\frac12}(\overline z-\tau)^{\frac12}}dz
+\frac i4 F(z)\frac{(z-\tau)^{\frac12}}{y^{\frac12}(\overline z-\tau)^{\frac32}}d\overline{z}
\Bigg)
,
\end{equation}
where  $\mathcal C$ is a constant. Now for general functions $f,g$ which are eigenfunctions of $\Delta$ with eigenvalue $1/4$, the quantity $[f,g]$ is actually a closed one-form.  This fact, combined with the modularity transformations of $F$ and the equivariance property
\[R_{\gamma \tau}(\gamma z)=(c\tau+d)R_{\tau}(z)\]
for $\gamma=\big(\begin{smallmatrix} a & b \\ c & d\end{smallmatrix}\big)\in\operatorname{SL}_2(\R)$ directly shows (as in (14) of \cite{Za1}) that
\begin{equation*}\label{MaassQuantumCocycle}
F^+(\tau)-(c\tau+d)^{-1}F^+(\gamma \tau)=-\int_{\gamma^{-1}i\infty}^{i\infty}\big[F(z),R_{\tau}(z)\big]
\end{equation*}
for all $\gamma\in\Gamma_F$.  This last integral converges since, by assumption, $F$ is cuspidal at $\gamma^{-1}i\infty$.  As the integral on the
right hand side of the last formula is  real-analytic on $\R\setminus\{\gamma^{-1}i\infty\}$, this establishes the first claim, if we note that the values of the quantum modular form, if they converge, are given as the
limits towards rational points from above. That is, the value of the quantum modular form at $\alpha\in\Q$ equals
\begin{equation}\label{LimitEquationFPlus}
F^+(\alpha):=\lim_{t\rightarrow0^+} F^+(\alpha+it).
\end{equation}

We next establish the second claim, which states that \eqref{LimitEquationFPlus} exists precisely for  those $\alpha$ for which $F$ is cuspidal.
By the existence of a Fourier expansion at all cusps in
Lemma \ref{Maass0Fourier}, and using the exponential decay of $K_0(x)$ as $x\to\infty$, we find that, for $t>0$,
\begin{equation}\label{FourierExpansionCuspAsympExp}
F(\alpha+it)\approx\frac{\kappa_1}{|c|\sqrt{t}}-\frac{\kappa_2}{|c|\sqrt{t}}\log\big(c^2t\big)
,
\end{equation}
where $\gamma=\big(\begin{smallmatrix}a & b\\ c& d\end{smallmatrix}\big)\in\operatorname{SL}_2(\Z)$ is chosen such that $\gamma \alpha=i\infty$ and we write $f(t) \approx g(t)$ if $f-g$ decays faster than any
polynomial in $t$, as $t\rightarrow0^+$. We have also used the fact that $c\alpha+d=0$ to note that the imaginary part of $\gamma(\alpha+it)$ is $t/|c\alpha+cit+d|^2=1/(c^2t)$. Our goal is to show that \eqref{LimitEquationFPlus} converges if and only if $\kappa_1=\kappa_2=0$.

For this, we also require an estimate on $\frac{\partial F}{\partial z}(\alpha+it)$. To compute this, we note that
$$
\frac{\partial}{\partial z}[F(z)]_{z=\alpha+it}
=\frac{\partial}{\partial z}\bigg[F(\gamma z)\bigg]_{z=\alpha+it}=\frac{1}{j(\gamma, \alpha+it)^2}F'\big(\gamma(\alpha+it)\big),
$$
where $j(\gamma,z):=(cz+d)$.
Now using Lemma \ref{Maass0Fourier}, we obtain
\[
	F'(z)\approx\frac{\partial}{\partial z}\Big(\kappa_1y^{\frac{1}{2}}+\kappa_2y^{\frac{1}{2}}\log(y)\Big)=\frac{i}{4}\Big((\kappa_1+2\kappa_2)y^{-\frac{1}{2}}+\kappa_2y^{-\frac{1}{2}}\log(y)\Big).
\]
Using that $
j(\gamma, \alpha+it)=cit \text{ and } \mathrm{Im}(\gamma(\alpha+it))=1/(c^2t),
$
we obtain that
\[
	\frac{\partial F}{\partial z}(\alpha+it)\approx
	\frac{-i}{4|c|t^{\frac{3}{2}}}\Big(\kappa_1+2\kappa_2-\kappa_2\log\big(c^2t\big)\Big).
\]


To determine when $\lim_{t\to 0^+}F^+(\alpha+it)$ exists, we need the following to converge:
\[
\int_\alpha^{i\infty}\Bigg(\frac{\partial F(z)}{\partial z}\frac{y^{\frac12}}{(z-\alpha)^{\frac12}(\overline{z}-\alpha)^{\frac12}}dz+\frac{i}{4}F(z)\frac{(z-\alpha)^{\frac12}}{y^{\frac12}(\overline{z}-\alpha)^{\frac32}}d\overline{z}\Bigg).
\]
Making the change of variables $z=\alpha+it$ (note that we need to conjugate the second term)  gives
$$
i\int_0^\infty\Bigg( \frac{\partial}{\partial z}\big[F(z)\big]_{z=\alpha+it}(-it)^{-\frac12}+\frac{i}{4}
\overline{F(\alpha+it)}(it)^{-\frac12}\Bigg).
$$
The top part of this integral, say from $1$ to $\infty$, is always convergent, since we assumed that $F$ is cuspidal at $i\infty$. Towards $0$, the integrand behaves like
\[
-\frac{i}{4|c|t^{\frac32}}\Big(\kappa_1+2\kappa_2-\kappa_2\log\big(c^2t\big)\Big)(-it)^{-\frac12}+\frac{i}{4}\Bigg(\frac1{|c|t^{\frac12}}\Big(\overline{\kappa}_1-\overline{\kappa}_2\log\big(c^2 t\big)\Big)(it)^{-\frac12}\Bigg).
\]
 Comparing alike powers then gives that the integral only converges for $\kappa_1=\kappa_2=0$, i.e., if $F$ is cuspidal.

\end{proof}

\section{Proofs of the main results}\label{ProofsSection}
\subsection{Proof of Theorem \ref{mainthm2}}

For any $M\in\N_{\geq2}$, consider the quadratic form $Q(x,y):=\frac12\big((M+1)x^2-(M-1)y^2\big)$ associated to the symmetric matrix
$
A:=\big(\begin{smallmatrix}
M+1 & 0
\\
0 & 1-M
\end{smallmatrix} \big)
$
and for $\ell\in\{1,2\}$ the vectors
\begin{align*}
c_\ell:=\frac{1}{\sqrt{M^2-1}}\big((-1)^\ell(M-1), M+1\big)^T.
\end{align*}
It is easily checked that $Q(c_\ell)=-1$ and $B(c_1,c_2)=-2M<0$, so that these two vectors lie in the same component $C_Q$.
Choose $a=(a_1,a_2)\in\Q^2$ and $b=(b_1,b_2)\in\Q^2$.
Then, for any vector $r=(n,\nu)^{\mathrm T}$, we find that
\[
B(r,c_1)B(r,c_2)=(M^2-1)(\nu - n)(\nu + n),
\]
and thus
\begin{equation*}
\rho_A(a+r)
=
\frac12\Big(1+\sign\big((a_1 - a_2 -\nu + n)(a_1 + a_2 + \nu + n)\big)\Big)
.
\end{equation*}
Given these choices, we find that the family of indefinite theta functions $S_{a,b;M}$ may be understood in Zwegers' notation via the relation
\[\Phi_{a,b}^+=\operatorname{sgn}(t_2-t_1) e\big((M+1)a_1b_1-(M-1)a_2b_2\big)S_{a,b;M}.\]
Since $\gamma_M$, defined in \eqref{gm}, can easily be verified to lie in $\operatorname{Aut}^+(Q,\Z)$ and  $\gamma_Mc_1=c_2$, the theorem then follows from Proposition \ref{MainThm2ZwegersResult} if  $(\gamma_M a,\gamma_M b)\sim(a,b)$. We next prove the theorem if $(\gamma_M a,\gamma_M b)\sim(a^*,b^*)$ and $(\gamma_M a^*,\gamma_M b^*)\sim(a,b)$. The key step is to show that the involution $(a,b)\mapsto(a^*,b^*)$ fixes $\widehat \Phi_{a,b}^{c,c'}$. For this, we compute a parameterization $c(t)$ of $C_Q$.
We  find that a suitable choice for $P$ is given by
 $
 P=\frac1{\sqrt2}
 \Big(
 \begin{smallmatrix}
\sqrt{M+1} & \sqrt{M-1}
\\
\sqrt{M+1} & -\sqrt{M-1}
 \end{smallmatrix}
\Big)
 .
 $
 Then we obtain that
$
 c(t)
 =
\bigg(\begin{smallmatrix}
 \sqrt{\frac{2}{M+1}}\sinh(t)
 \\
 \sqrt{\frac{2}{M-1}}\cosh(t)
 \end{smallmatrix}\bigg)
 .
$
In this parameterization, we have $t_\ell=(-1)^{\ell+1}\operatorname{arcsinh}(-\sqrt{(M-1)/2})$ for $\ell\in\{1,2\}$, and  $c(-t)=c^*(t)$. Hence, by sending $x\mapsto-x$ and $r\mapsto r^*$ in \eqref{ZwegersPhiHatDefn}, we find that $\widehat \Phi_{a^*,b^*}^{c_1,c_2}=\widehat \Phi_{a,b}^{c_1,c_2}$. We then obtain, using Lemma \ref{Zlem}, that
\begin{equation*}
\begin{aligned}
2\widehat \Phi_{a,b}^{c_1,c_2}
&
=
\widehat \Phi_{a,b}^{c_1,c_2}+\widehat \Phi_{a^*,b^*}^{c_1,c_2}
\\
&
=
\Phi_{a,b}^{c_1,c_2}+\Phi_{a^*,b^*}^{c_1,c_2}+\varphi_{a,b}^{c_1}-\varphi_{a,b}^{c_2}+\varphi_{a^*,b^*}^{c_1}-\varphi_{a^*,b^*}^{c_2}
\\
&
=
\Phi_{a,b}^{c_1,c_2}+\Phi_{a^*,b^*}^{c_1,c_2}+\varphi_{a^*,b^*}^{c_2}-\varphi_{a,b}^{c_2}+\varphi_{a,b}^{c_2}-\varphi_{a^*,b^*}^{c_2}
\\
&
=
\Phi_{a,b}^{c_1,c_2}+\Phi_{a^*,b^*}^{c_1,c_2}
,
\end{aligned}
\end{equation*}
which shows that the completion terms in $\Phi_{a,b}^{c_1,c_2}$ cancel out, as desired. Finally, we note that since $M\in\N_{\ge2}$, $Q(x,y)$ cannot vanish at rational values $x,y$ unless $x=y=0$ since $M-1$ and $M+1$ are coprime and cannot both be squares. As we have supposed that $a\in\Q^2\setminus\{0\}$, it automatically follows that the quadratic form above cannot vanish on $a+\mathbb{Z}^2$, and hence our choice satisfies the convergence requirement in Theorem \ref{Zthm}.

\subsection{Proof of Theorem \ref{mainthm1}}
We begin by showing the connection of the relevant $q$-series to indefinite theta functions.   To do so, we use the Bailey pairs in the following lemma.   These pairs have the rare and important feature that the $\beta_n$ are polynomials.
\begin{lemma} \label{twopairslemma}
Let $k,\ell \in \mathbb{N}$ with $1 \leq \ell \leq k$.    We have the Bailey pair relative to $1$,
\begin{align}
  \alpha_n &= -q^{(k+1)n^2-n}\big(1-q^{2n}\big)\sum_{\nu=-n}^{n-1}(-1)^\nu q^{-\frac{1}{2}(2k+1)\nu^2 - \frac{1}{2}(2k-(2\ell-1))\nu} \label{firstalpha} \\
\intertext{and}
\beta_n &= H_n(k,\ell;1;q) \cdot  \chi_{n \neq 0} \label{firstbeta},
\end{align}
and the Bailey pair relative to $q$,
\begin{align}
  \alpha_n &= \frac{1-q^{2n+1}}{1-q}q^{(k+1)n^2+kn}\sum_{\nu=-n}^{n}(-1)^\nu q^{-\frac{1}{2}(2k+1)\nu^2 - \frac{1}{2}(2k - (2\ell-1))\nu} \label{secondalpha}\\
\intertext{and}
\beta_n &= H_n(k,\ell;0;q).  \label{secondbeta}
\end{align}
\end{lemma}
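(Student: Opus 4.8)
The plan is to verify both Bailey pairs by establishing the defining relation $\beta_n = \sum_{k=0}^n \frac{\alpha_k}{(q)_{n-k}(aq)_{n+k}}$ directly, exploiting the fact that $\beta_n = H_n(k,\ell;b;q)$ is a \emph{finite} nested sum of Gaussian polynomials, so that everything in sight is a polynomial identity which can in principle be checked. More efficiently, I would build both pairs inductively in the parameter $k$ using the standard ``Bailey chain'' (or rather, the iterated Bailey pair construction from Andrews--Gordon theory). Concretely, one starts from the trivial (unit) Bailey pair $\alpha_n^{(0)} = (-1)^n q^{\binom n2}(1-q^{2n})/(1-q)\cdot(\dots)$, $\beta_n^{(0)} = \chi_{n=0}$ (or its analogue relative to $q$), and applies the iteration $\beta_n^{(j+1)} = \sum_{n_j\le n} \frac{q^{n_j^2 + (1-b)n_j}}{(q)_{n-n_j}} \beta_{n_j}^{(j)}$ together with the corresponding multiplier on the $\alpha$ side, $\alpha_n^{(j+1)} = q^{n^2+(1-b)n}\alpha_n^{(j)}$ (this is exactly Bailey's lemma with the specialization $\rho_1,\rho_2\to\infty$ that produces the $q^{n^2}$ weights, appropriately twisted by $b$). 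Iterating $k-1$ times reproduces precisely the nested-sum structure of \eqref{Hdef}, including the shift $-bj + \sum_{r=1}^j(2n_r + \chi_{\ell>r})$ in the top of the Gaussian polynomial, which comes from keeping track of how the $(q)_{n+k}$ and $(q)_{n-k}$ factors in the Bailey relation interact across the chain; the $\chi_{\ell > r}$ terms encode the choice of which "defect'' one tracks, i.e. which Andrews--Gordon string function $G_{k-1,\ell,k,\cdot}$ one lands on, matching \eqref{A-Grelationbis}.

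The key computational steps, in order, are: (1) identify the correct base-case Bailey pair relative to $1$ and relative to $q$ whose $\alpha_n$ is, up to the overall $q$-power, the classical theta-type sum $\sum_\nu (-1)^\nu q^{-\frac12(2k+1)\nu^2 - \cdots}$ with $k$ replaced by $1$ and $\ell$ fixed; (2) verify that one application of the twisted Bailey iteration multiplies $\alpha_n$ by $q^{n^2 + (1-b)n}$ and simultaneously shifts the quadratic exponent inside the $\nu$-sum in the way that, after $k-1$ steps, yields exactly \eqref{firstalpha} and \eqref{secondalpha} --- here one must carefully track the completion of squares so that the prefactors $q^{(k+1)n^2 - n}$ (resp.\ $q^{(k+1)n^2+kn}$) and the inner exponent $-\frac12(2k+1)\nu^2 - \frac12(2k-(2\ell-1))\nu$ emerge with the stated coefficients; (3) check that on the $\beta$ side the same iteration reproduces \eqref{Hdef} verbatim, including the $\chi_{n\neq0}$ (resp.\ no restriction) in \eqref{firstbeta} and \eqref{secondbeta}, which reflects whether the base $\beta_0$ term survives the $b=1$ versus $b=0$ weighting. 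An alternative and perhaps cleaner route for step (1)--(2) is to cite the Andrews--Gordon Bailey pairs already in the literature (e.g.\ the pairs underlying Theorem 5 of \cite{Wa1}, or the torus-knot computation in \cite{Hi-Lo1} for $b=1$) and simply perform the change of variables recorded in \eqref{A-Grelationbis} to translate between their normalization and ours; the $b=0$ case would then follow by the analogous manipulation relative to the base $a=q$.

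I expect the main obstacle to be the bookkeeping in step (2): matching the \emph{exact} quadratic and linear exponents in $\alpha_n$ after the $(k-1)$-fold iteration. Each iteration introduces a $q^{n^2}$ (or $q^{n^2+n}$) factor, and the inner sum over $\nu$ must absorb these while the summation range $-n \le \nu \le n-1$ (resp.\ $-n\le\nu\le n$) and the sign $(-1)^\nu$ stay intact; a single miscalculation in completing the square propagates into the wrong $\ell$-dependence. The cleanest way to control this is to prove by induction on $k$ that the $\alpha_n$ at stage $k$ has the asserted closed form, using the identity $\sum_{\nu} (-1)^\nu q^{A\nu^2 + B\nu} \cdot (\text{weight from one iteration})$ re-sums to the stage-$(k+1)$ expression --- effectively a Jacobi-triple-product-flavored rearrangement at finite level. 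Once the $\alpha$'s are pinned down, verifying the $\beta$ side is a direct comparison with \eqref{Hdef} and requires no new ideas. The passage through \eqref{A-Grelationbis} and Theorem 5 of \cite{Wa1} should, if invoked, shortcut essentially all of this, reducing the proof to a citation plus a bounded change of variables.
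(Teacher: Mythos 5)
Your primary route --- building both pairs by a $(k-1)$-fold Bailey chain iteration in which $\alpha_n$ picks up a factor $q^{n^2+(1-b)n}$ at each step and the $\nu$-sum ``absorbs'' these weights --- has a structural gap. The standard iteration ($\rho_1,\rho_2\to\infty$ in \eqref{limitBailey}) multiplies $\alpha_n$ by a factor depending only on $n$ and $a$; it cannot alter the \emph{internal} exponent of the $\nu$-sum, so the $k$-dependent coefficient $-\tfrac12(2k+1)\nu^2$ in \eqref{firstalpha} and \eqref{secondalpha} cannot be generated by induction on $k$ from a $k$-independent base pair: it must already sit in the base pair, which defeats the induction. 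Worse, the $\beta$-side iteration you write, $\beta_n^{(j+1)}=\sum_{n_j\le n}q^{n_j^2+(1-b)n_j}\beta^{(j)}_{n_j}/(q)_{n-n_j}$, produces the classical Andrews--Gordon sums with denominators $(q)_{n_{j+1}-n_j}$, not the \emph{polynomial} $H_n(k,\ell;b;q)$ of \eqref{Hdef}, whose Gaussian polynomials have top rows involving $2\sum_{r\le j} n_r$. Converting one into the other is exactly the nontrivial content of the proof, and your plan does not contain that step.

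What the paper actually does: the relative-$1$ pair is cited from Section 5 of \cite{Hi-Lo1} (so your fallback is right on target there for $b=1$); for the relative-$q$ pair it specializes part (i) of Theorem 1.1 of \cite{Lo1} (with $K\mapsto k$, $\ell\mapsto k-\ell$) to obtain the stated $\alpha_n$ together with a $2k$-fold nested sum for $\beta_n$ containing cross terms $-\sum m_\nu m_{\nu+1}$, and then performs a specific change of summation variables followed by the $q$-binomial theorem $\sum_u(-z)^uq^{\binom u2}\left[\begin{smallmatrix}n\\ u\end{smallmatrix}\right]=(z)_n$ to collapse the inner sums into the Gaussian polynomials of \eqref{Hdef}. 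Your alternative suggestion --- cite the literature and translate via \eqref{A-Grelationbis} --- is essentially the referee's remark in the paper (using ideas from \cite{Wa2}) and is viable, but as written it is a placeholder: you would still need to identify the correct source pair (it is \cite{Lo1}, not the pairs underlying Theorem 5 of \cite{Wa1} directly) and carry out the variable change and $q$-binomial resummation explicitly. So the proposal correctly locates half the proof in a citation but misdiagnoses the real difficulty in the other half as mere exponent bookkeeping.
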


 \begin{proof}
The Bailey pair relative to $1$ was established in \cite[Section 5]{Hi-Lo1}.    The proof of the Bailey pair relative to $q$ follows by using a similar argument.   We begin by replacing $K$ by $k$ and $\ell$ by $k-\ell$ in part (i) of Theorem 1.1 of \cite{Lo1}.   This gives that $(\alpha_n,\beta_n)$ is a Bailey pair relative to $q$, where $\alpha_n$ is given in \eqref{secondalpha} and $\beta_n$ is the $z=1$ instance of
\begin{equation} \label{zcase}
\beta_n(z) = \sum_{n \geq m_{2k-1} \geq \ldots \geq m_1 \geq 0} \frac{q^{\sum_{\nu=1}^{k-1} (m_{k+\nu}^2+m_{k+\nu}) + \binom{m_k+1}{2} - \sum_{\nu=1}^{k-1} m_\nu m_{\nu+1} - \sum_{\nu=1}^{k-\ell} m_\nu}(-z)^{m_k}}{(q)_{m_{2k}-m_{2k-1}}(q)_{m_{2k-1}-m_{2k-2}}\cdot\ldots\cdot (q)_{m_2-m_1}(q)_{m_1}},
\end{equation}
where $m_{2k} := n$.    To transform the above into \eqref{secondbeta}, we argue as in Sections 3 and 5 of \cite{Hi-Lo1}.    We replace $m_1,\dots,m_{2k-1}$ by the new summation variables $n_1,\dots,n_{k-1}$ and $u_1,\dots,u_k$ as follows:
\begin{equation} \label{replacement}
m_{\nu} \mapsto 
\begin{cases}
u_{k-\nu+1} + \cdots + u_k & \text{for $1 \leq \nu \leq k$}, \\
n_{\nu-k} + u_{\nu-k+1} + \cdots + u_k & \text{for $k+1 \leq \nu \leq 2k-1$}. 
\end{cases}
\end{equation}
With $m_0 = n_0 = 0$ and $n_{k} = n$, the inequalities $m_{i+1} - m_i \geq 0$ in \eqref{zcase} for $0 \leq i \leq k-1$ give $u_i \geq 0$ and the inequalities $m_{k+i+1} - m_{k+i} \geq 0$ for $0 \leq i \leq k-1$ then give $0 \leq u_i \leq n_{i+1}  - n_i$.   Thus after a calculation to determine the image of the summand of \eqref{zcase} under the transformations in \eqref{replacement}, we find that
\begin{equation*}
\beta_n(z) = \sum_{n \geq n_{k-1} \geq \cdots \geq n_1 \geq 0} \prod_{\nu = 1}^k \sum_{u_{\nu} = 0}^{n_{\nu} - n_{\nu-1}} \frac{\big(-zq^{\min\{\nu,\ell\} + 2\sum_{\mu = 1}^{\nu-1}n_{\mu}}\big)^{u_{\nu}}q^{\binom{u_{\nu}}{2} + 2\binom{n_{\nu-1} + 1}{2}}}{(q)_{n_{\nu} - n_{\nu-1}}} \begin{bmatrix} n_{\nu} - n_{\nu-1} \\ u_{\nu} \end{bmatrix}.
\end{equation*}
By the $q$-binomial theorem 
\begin{equation} \label{qbin}
\sum_{u=0}^n (-z)^uq^{\binom{u}{2}}\begin{bmatrix} n \\ u \end{bmatrix} = (z)_n,
\end{equation}
each of the sums over $u_{\nu}$ may be carried out, giving
\begin{equation*}
\beta_n(z) = \sum_{n \geq n_{k-1} \geq \cdots \geq n_1 \geq 0} \prod_{\nu = 1}^k q^{2\binom{n_{\nu-1} + 1}{2}} \frac{\big(zq^{\min\{\nu,\ell\} + 2\sum_{\mu = 1}^{\nu-1} n_{\mu}}\big)_{n_{\nu} - n_{\nu-1}}}{(q)_{n_{\nu} - n_{\nu-1}}}.
\end{equation*}
Using the fact that
$$
\begin{bmatrix} n \\ k \end{bmatrix}_q = \frac{(q^{k+1})_{n-k}}{(q)_{n-k}},
$$
we then have 
\begin{align*}
\beta_n(1) &= \sum_{n \geq n_{k-1} \geq \cdots \geq n_1 \geq 0} \prod_{\nu = 1}^k q^{2\binom{n_{\nu-1} + 1}{2}} \begin{bmatrix}  \min\{\nu,\ell\} - 1 + n_{\nu} - n_{\nu-1} + 2\sum_{\mu=1}^{\nu-1} n_{\mu} \\ n_{\nu} - n_{\nu-1} \end{bmatrix} \\
&=\sum_{n \geq n_{k-1} \geq \cdots \geq n_1 \geq 0} \prod_{\nu = 1}^{k-1} q^{2\binom{n_{\nu} + 1}{2}} \begin{bmatrix}  \min\{\nu,\ell - 1\} + n_{\nu+1} - n_{\nu} + 2\sum_{\mu=1}^{\nu} n_{\mu} \\ n_{\nu+1} - n_{\nu} \end{bmatrix},
\end{align*}
in agreement with the $H_n(k,\ell;0;q)$, defined in \eqref{Hdef}.
\end{proof}

\begin{remark}
 The referee has observed that Lemma \ref{twopairslemma} could also be proved by using \eqref{A-Grelationbis} together with ideas from \cite{Wa2}.
 \end{remark}

 With these Bailey pairs we prove the following key proposition.
\begin{proposition}\label{IndefThetaFj}
We have
\begin{align}
F_1(k,\ell;q) & = \sum_{n \geq 0} \sum_{|\nu | \leq n}  (-1)^{n+\nu }q^{(k+1)n^2+kn+\binom{n+1}{2} - \frac12\big((2k+1)\nu ^2 + (2k - (2\ell-1))\nu \big)}\big(1-q^{2n+1}\big), \label{F1identity} \\
F_2(k,\ell;q) & = \frac{1}{2}\sum_{n \geq 0} \sum_{|\nu | \leq n}  (-1)^{n+\nu }q^{(k+1)n^2+kn - \frac12\big((2k+1)\nu ^2 + (2k - (2\ell-1))\nu \big)}\big(1-q^{2n+1}\big), \notag \\ 
F_3(k,\ell;q) & = -\sum_{n \geq 1} \sum_{\nu  = -n}^{n-1}  (-1)^{n+\nu }q^{(k+1)n^2+  \binom{n}{2} - \frac12\big((2k+1)\nu ^2 + (2k - (2\ell-1)) \nu \big)}\big(1+q^n\big), \notag \\ 
F_4(k,\ell;q) & = -2\sum_{n \geq 1} \sum_{\nu = -n}^{n-1}  (-1)^{n+\nu }q^{(k+1)n^2 - \frac12\big((2k+1)\nu ^2 + (2k - (2\ell-1)) \nu \big)}. \notag
\end{align}
\end{proposition}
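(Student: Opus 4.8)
The plan is to obtain each of the four identities by feeding the two Bailey pairs of Lemma~\ref{twopairslemma} into the corresponding specialization of Bailey's lemma from Lemma~\ref{Baileylemmaspecial}. The pairings are forced by the prefactors appearing in \eqref{FFnsDefn}: I would match $F_1$ with \eqref{Baileya=qeq1} and $F_2$ with \eqref{Baileya=qeq2}, in each case using the Bailey pair relative to $q$ of Lemma~\ref{twopairslemma}, for which $\beta_n=H_n(k,\ell;0;q)$; and I would match $F_3$ with \eqref{Baileya=1eq1} and $F_4$ with \eqref{Baileya=1eq2}, in each case using the Bailey pair relative to $1$, for which $\beta_n=H_n(k,\ell;1;q)\cdot\chi_{n\neq0}$. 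The only preliminary rewriting needed is for $F_4$: from $(1-q^j)(1+q^j)=1-q^{2j}$ one gets $(-1)_n(q)_{n-1}=2\,(q^2;q^2)_{n-1}$, so that $F_4(k,\ell;q)=2\sum_{n\geq1}(q^2;q^2)_{n-1}(-q)^nH_n(k,\ell;1;q)$, which is twice the left-hand side of \eqref{Baileya=1eq2} applied to this pair.

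After applying the lemma, all that remains is elementary simplification of the right-hand sides using the explicit $\alpha_n$ in \eqref{firstalpha} and \eqref{secondalpha}. For $F_1$, the factor $1-q$ produced by \eqref{Baileya=qeq1} cancels the $\tfrac1{1-q}$ in \eqref{secondalpha}, leaving $(1-q^{2n+1})q^{(k+1)n^2+kn}$ times the $\nu$-sum; multiplying by $(-1)^nq^{\binom{n+1}{2}}$ and absorbing the sign $(-1)^\nu$ yields \eqref{F1identity}. The computation for $F_2$ is identical except that the overall constant is $\tfrac{1-q}{2}$ and the factor $q^{\binom{n+1}{2}}$ is absent. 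For $F_3$, the identity $\frac{1-q^{2n}}{1-q^n}=1+q^n$ cancels the denominator $1-q^n$ in \eqref{Baileya=1eq1} against the factor $1-q^{2n}$ in \eqref{firstalpha}, and $\binom{n+1}{2}-n=\binom{n}{2}$ absorbs the $q^{-n}$, producing the stated sum with the factor $1+q^n$. For $F_4$, the factor $1-q^{2n}$ in \eqref{firstalpha} cancels the denominator $1-q^{2n}$ in \eqref{Baileya=1eq2} outright, and $(-q)^n\cdot(-q^{(k+1)n^2-n})=-(-1)^nq^{(k+1)n^2}$, which gives the claimed formula.

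The only genuine subtlety is convergence, since Lemma~\ref{Baileylemmaspecial} is stated under the hypothesis that both sides converge. For $F_1$, $F_3$, and $F_4$ this holds for $|q|<1$: the right-hand sides converge superexponentially thanks to the factor $q^{(k+1)n^2}$ (the inner $\nu$-sum contributing only $O(n)$ bounded terms), while on the left the factor $q^{\binom{n+1}{2}}$ (for $F_1$ and $F_3$) or $(-q)^n$ (for $F_4$, after the rewriting above) forces decay, since for fixed $|q|<1$ the polynomials $H_n$ are bounded in $n$, converging as $n\to\infty$ to the relevant Andrews--Gordon-type infinite products. For $F_2$, by contrast, the left-hand side of \eqref{Baileya=qeq2} literally diverges --- the summand $(q^2;q^2)_n(-1)^nH_n(k,\ell;0;q)$ tends to a nonzero limit --- so $F_2$ must be read through the averaging of even and odd partial sums indicated in the footnote to \eqref{FFnsDefn}, and one should check that this regularization is compatible with taking the limit in Bailey's lemma, precisely as in the derivation of the Andrews--Dyson--Hickerson identity \eqref{id2}. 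I expect this convergence bookkeeping for $F_2$ to be the only step requiring real care; everything else is routine.
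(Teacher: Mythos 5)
Your proposal is correct and follows the paper's own route: substitute the Bailey pairs of Lemma~\ref{twopairslemma} into the four specializations of Bailey's lemma in Lemma~\ref{Baileylemmaspecial} and simplify, using the rewriting $(-1)_n(q)_{n-1}=2\big(q^2;q^2\big)_{n-1}$ for $F_4$ and the even/odd partial-sum averaging for $F_2$. Your matching of pairs to equations (the pair relative to $q$ with \eqref{Baileya=qeq1}, \eqref{Baileya=qeq2} for $F_1,F_2$, and the pair relative to $1$ with \eqref{Baileya=1eq1}, \eqref{Baileya=1eq2} for $F_3,F_4$) is the correct one --- the paper's one-sentence proof states these attributions in the opposite order, which appears to be a slip --- and your explicit algebra and convergence discussion simply fill in what the paper leaves implicit.
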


\begin{proof}
The first two identities follow upon using the Bailey pair in \eqref{firstalpha} and \eqref{firstbeta} in equations \eqref{Baileya=1eq1} and \eqref{Baileya=1eq2}, while the second two use \eqref{secondalpha} and \eqref{secondbeta} in equations \eqref{Baileya=qeq1} and \eqref{Baileya=qeq2}.
\end{proof}

We are now ready to prove our main result.
\begin{proof}[Proof of Theorem \ref{mainthm1}]
We first apply Theorem \ref{mainthm2} to the indefinite theta function representations of the $F_\nu $, given in Proposition \ref{IndefThetaFj}.   We begin with $F_1$.   Using the term $(1-q^{2n+1})$ to split the right-hand side into two sums and then replacing $n$ by $-n-1$ in the second sum, we obtain
\begin{equation*}
F_1(k,\ell;q) = \Bigg(\sum_{n\pm \nu  \geq 0} + \sum_{n\pm\nu  < 0 }\Bigg) (-1)^{n+\nu }q^{(k+1)n^2+kn+\binom{n+1}{2} - \frac12\big((2k+1)\nu ^2 + (2k - (2\ell-1))\nu \big)}.
\end{equation*}
By completing the square, we directly compute that
\begin{equation*}
q^{\frac{(2k+1)^2}{8(2k+3)} - \frac{(2k-2\ell+1)^2}{8(2k+1)}}F_1(k,\ell;q) = \Bigg(\sum_{n\pm\nu  \geq 0 } + \sum_{n\pm\nu  < 0 }\Bigg) (-1)^{n+\nu } q^{\frac{1}{2}(2k+3)\big(n+\frac{2k+1}{2(2k+3)}\big)^2 - \frac{1}{2}(2k+1)\big(\nu +\frac{2k-2\ell+1}{2(2k+1)}\big)^2}.
\end{equation*}
We claim that the right-hand side is equal to $S_{a,b;M}$ defined in \eqref{Sdef}
with $ M=2k + 2, a=(\frac{2k+1}{2(2k+3)},\frac{2k-2\ell+1}{2(2k+1)})^{\mathrm T}$, and $b=(\frac1{2(2k+3)},\frac1{2(2k+1)})^{\mathrm T}.$    The summand is directly seen to match that of \eqref{Sdef}.    To show that the summation bounds are correct, we use the restrictions on $k$ and $\ell$ to verify the inequalities $0<a_1\pm a_2<1$.   For example, to see this for $a_1-a_2$, we note that
\[
a_1-a_2=\frac{{(2k+3)\ell-2k-1}}{(2k+3)(2k+1)}
\]
is positive exactly if $\ell>(2k+1)/(2k+3)$. As $0<(2k+1)/(2k+3)<1$ and $\ell\geq1$, this inequality automatically holds. To check the upper bound, note that $a_1-a_2<1$ exactly if $\ell<\frac{2(k+2)(2k+1)}{2k+3}$. This last expression is always bigger than $k$, and $\ell$ is, by assumption, bounded by $k$, so this inequality holds. The inequalities on $a_1+a_2$ may be checked in a similar manner.
We then show that
\begin{equation*}
\gamma_M a+(\ell-2k-1)\begin{pmatrix}1\\ 1\end{pmatrix}=a^*,\quad \gamma_M a^*+\ell\begin{pmatrix}1\\ 1\end{pmatrix}=a, \quad \gamma_M b-\begin{pmatrix}1\\ 1\end{pmatrix}=b^*,\quad\text{and}\quad
\gamma_M b^*=b
\end{equation*}
and also that $B(a,(-1,-1)^{\mathrm T})=-\ell\in\Z$ Theorem \ref{mainthm2} yields the first claim in Theorem \ref{mainthm1} for $F_1$, namely that it is the generating function for the positive coefficients of a Maass waveform.  We return to the question of cuspidality of this Maass form below, after indicating the related calculations which must be performed on the other $F_j$.

In the case of $F_2$, we find in the same manner that
$F_2(k,\ell;q^2)$ is equal (up to a rational power of $q$) to  $\frac12S_{a,b;M}$,
where $M=4k+3, a=(\frac{k}{2(k+1)},\frac{2k-2\ell+1}{2(2k+1)})^{\mathrm T},\text{ and }b=(\frac1{8(k+1)},\frac1{4(2k+1)})^{\mathrm T}.$
As above, we check that
\begin{equation*}
\gamma_M a+(2\ell-4k-1)\begin{pmatrix}1\\ 1\end{pmatrix}=a^*, \quad\gamma_M a^*+(2\ell-1)\begin{pmatrix}1\\ 1\end{pmatrix}=a, \quad\gamma_M b-\begin{pmatrix}1\\ 1\end{pmatrix}=b^*, \quad\text{and}\quad\gamma_M b^*=b.
\end{equation*}
Here, we also have
$0<a_1\pm a_2<1$, and we compute $B(a,(-1,-1)^{\mathrm T})=-2\ell+1\in\Z$, which establishes the theorem for $F_2$.

 For $F_3$, we use the specializations $M=2k+2, a=(-\frac1{2(2k+3)},\frac{2k-2\ell+1}{2(2k+1)})^\mathrm{T}, b=(\frac1{2(2k+3)}, \frac1{2(2k+1)})^\mathrm{T},$
and find that  $0<a_1+ a_2<1, -1<a_1- a_2<0$
\[
\gamma_M a+(\ell-k)\begin{pmatrix} 1\\ 1\end{pmatrix}=a^*, \quad \gamma_M a^*+(\ell-k-1)\begin{pmatrix} 1\\ 1\end{pmatrix}=a,
\quad\gamma_M b-\begin{pmatrix}1\\ 1\end{pmatrix}=b^*,  \quad\gamma_M b^*=b,
\]
and $B(a,(-1,-1)^{\mathrm T})=k-\ell+1$.

Finally, for $F_4$, we have $M=4k+3, a=(0, \frac{2k-2\ell+1}{2(2k+1)})^\mathrm{T}, b=(\frac1{8(k+1)}, \frac1{4(2k+1)})^\mathrm{T},$
and calculate that $0<a_1+ a_2<1, -1<a_1- a_2<0$, while
\[
\gamma_M a+(2\ell-2k-1)\begin{pmatrix} 1\\ 1\end{pmatrix}=a,
\quad\gamma_M b^*=b, \quad\gamma_M b-\begin{pmatrix}1\\ 1\end{pmatrix}=b^*
,
\]
and $B(a,(-1,-1)^{\mathrm T})=2k-2\ell+1$.

Thus, we have shown that the $q$-series in Theorem \ref{mainthm1} are indeed the positive parts of Maass forms. By the construction of Zwegers' Maass forms via the Fourier expansions in \eqref{Phidef}, we see that the Maass forms here are all cuspidal at $i\infty$ whenever they converge.
Theorems \ref{MaassQMFThm} and \ref{mainthm3} then imply that in fact each of the Maass forms in Theorem \ref{mainthm1} are indeed cusp forms.
\end{proof}

\subsection{Proof of Theorem \ref{mainthm3} }

Theorem \ref{mainthm3} follows directly from Theorem \ref{mainthm1} and Theorem \ref{MaassQMFThm}, together with the observation that the $q$-series in \eqref{FFnsDefn} converge (as they are finite sums) at all roots of unity, which implies that their radial limits exist and equal these values by Abel's theorem. Although Theorem \ref{MaassQMFThm} is only stated for Maass forms with trivial multiplier for simplicity, a review of the proof shows that the method applies equally well to our Maass waveforms with multipliers.

\section{Further questions and outlook}\label{QuestionsSection}

There are several outstanding questions which naturally arise from the main results considered here. In what follows, we outline five interesting directions for future investigation.

{\bf 1).}  As the example of $\sigma,\sigma^*$ indicates, it is worthwhile to look at the negative coefficients of the related Maass form.  Thus, it is natural to ask: are there nice hypergeometric representations for the $q$-series formed by the negative coefficients of the Maass forms $G_{j ,k,\ell}$ in Theorem \ref{mainthm2}? For example, one such series has the shape
\begin{equation*}
\sum_{n, \nu \in \Z \atop |(M+1)n+M-1|<2|(M-1)\nu+M-1-2 \ell|}(-1)^{n+\nu}q^{-\frac{1}{8(M+1)(M-1)}\big((M-1)(2(M+1)n+M-1)^2-(M+1)(2(M-1)\nu+M-1-2\ell)^2\big)}.
\end{equation*}
If so, do they have relations to the $q$-hypergeometric series defining the $F_j$-functions, as $\sigma$ and $\sigma^*$ satisfy?  Such a connection could help explain relationships between passing from positive to negative coefficients of Maass waveforms and letting $q\mapsto q^{-1}$ in $q$-hypergeometric series. Examples of such relationships were observed by Li, Ngo, and Rhoades \cite{RobMaass}, and further commented on in \cite{KRW}. However, the authors were unable to identify suitable Bailey pairs to make this idea work in our case.

{\bf 2).} As in Theorem \ref{Zthm}, we may also think of the Maass forms corresponding to the  $F_j$-functions as components of vector-valued Maass waveforms (as discussed in detail for the $\sigma,\sigma^*$ case in \cite{ZwegersMockMaass}).  Is it possible to find nice $q$-hypergeometric interpretations for the corresponding positive (or negative) coefficients of the other components of such vectors as well? That is, are the $q$-series associated to the expansions of the Maass waveforms at other cusps than $i\infty$ interesting from a $q$-series or combinatorial point of view?

{\bf 3).}
Define the $q$-series
\begin{equation} \notag 
\mathcal{U}_k^{(\ell)}(x;q) := \sum_{n \geq 0}q^{n} (-x)_{n}\bigg(\frac{-q}{x}\bigg)_{n}H_{n}(k,\ell;0;q).
\end{equation}
These are analogous to the series $U_k^{(\ell)}(x;q)$, defined by Hikami and the second author \cite{Hi-Lo1} by
\begin{equation} \notag 
U_k^{(\ell)}(x;q) := q^{-k}\sum_{n \geq 1} q^{n}(-xq)_{n-1}\bigg(\frac{-q}{x}\bigg)_{n-1} H_{n}(k,\ell;1;q).
\end{equation}
At roots of unity the functions $U_k^{(\ell)}(-1;q)$ are (vector-valued) quantum modular forms which are ``dual" to the generalized Kontsevich-Zagier functions
\begin{equation}
  \notag 
  F_k^{(\ell)}(q)
  :=
  q^k
  \sum_{n_1, \dots, n_k\geq 0}
  (q)_{n_k} \,
  q^{n_1^{2} + \cdots + n_{k-1}^{2} + n_{\ell} + \cdots + n_{k-1}} \,
  \prod_{j=1}^{k-1}
  \begin{bmatrix}
    n_{j+1} + \delta_{j,\ell-1} \\
    n_j
  \end{bmatrix},
\end{equation}
in the sense that
\begin{equation}\notag %
F_k^{(\ell)}(\zeta_N) = U_k^{(\ell)}\big(-1;\zeta_N^{-1}\big),
\end{equation}
where $\zeta_N:=e^{2\pi i /N}$.
Are the $\mathcal{U}_k^{(\ell)}(-1;q)$ also quantum modular forms like the $U_k^{(\ell)}(-1;q)$?  Are they related at roots of unity to some sort of Kontsevich-Zagier type series?

{\bf 4).} Using Bailey pair methods one can show that
\begin{align}
\mathcal{U}_k^{(\ell)}(-x;q)
&= \frac{(x)_{\infty} \big(\frac{q}{x}\big)_\infty}{
      (q)_\infty ^2} \notag 
    \\
    &\times
    \vast(
      \sum_{\substack{r,s,t \geq 0 \\ r \equiv s \pmod{2}}} +
      \sum_{\substack{r,s,t < 0 \\ r \equiv s \pmod{2}}}
    \vast)
    (-1)^{\frac{r-s}{2}}x^t
      q^{\frac{r^2}{8}+
        \frac{4k+3}{4} r s + \frac{s^2}{8}+\frac{4k+3-2\ell}{4} r
        +
        \frac{1+2\ell}{4} s + t\frac{r+s}{2}}  . \nonumber
    \end{align}
This is analogous to \cite{Hi-Lo1}
\begin{align}
U_k^{(\ell)}(-x;q)
&=
    -q^{-\frac{k}{2}-\frac{\ell}{2}+\frac{3}{8}}
    \frac{(x q)_{\infty}  \big(\frac{q}{x}\big)_\infty}{
      (q)_\infty ^2} \notag 
    \\
    &\times
    \vast(
      \sum_{\substack{r,s,t \geq 0 \\ r \not \equiv s \pmod{2}}} +
      \sum_{\substack{r,s,t < 0 \\ r \not \equiv s \pmod{2}}}
    \vast)
    (-1)^{\frac{r-s-1}{2}}x^t
      q^{\frac{r^2}{8}+
        \frac{4k+3}{4} r s + \frac{s^2}{8}+\frac{1+\ell+k}{2} r
        +
        \frac{1-\ell+k}{2} s + t\frac{r+s+1}{2}}  .  \nonumber
    \end{align}
What sort of modular behavior is implied by these expansions?

{\bf 5).} As per the discussion in \cite{RobMaass}, there is hope that the Maass forms in Theorem \ref{mainthm1} are related to Hecke characters or multiplicative $q$-series. In fact, such connections were related to all related examples of $q$-hypergeometric examples found in the literature, although finding a general formulation seems intractable at the moment since as the discriminants of the quadratic fields grow, explicitly identifying such characters becomes computationally difficult.

\section*{Acknowledgements}
We are grateful to the referee for many helpful comments, especially the observation that the polynomials $H_n(k,\ell;b;q)$ can be related to the Andrews-Gordon identities and for simplifying the proof of Lemma \ref{twopairslemma}.


\begin{thebibliography}{99}
	
	\bibitem{An1}
	G. Andrews, \emph{Multiple series Rogers-Ramanujan identities}, Pacific  J. Math. {\bf 114} (1984), 267--283.
	
	\bibitem{AndrewsLostNotebookV} G. Andrews, {\it Ramanujan's ``Lost''Notebook V: Euler's Partition Identity}, Adv. Math. {\bf 61} (1986), 156--164.
	
	\bibitem{An2}
	G. Andrews, \emph{$q$-Series: Their Development and Application in Analysis, Number
		Theory, Combinatorics, Physics, and Computer Algebra}, volume 66 of Regional
	Conference Series in Mathematics. American Mathematical Society, Providence, RI,
	1986.
	
	\bibitem{An-Dy-Hi1}
	G. Andrews, F. Dyson, and D. Hickerson, \emph{Partitions and indefinite quadratic forms}, Invent. Math. {\bf  91} (1988), no. 3, 391--407.
	
	
	
	
	\bibitem{Bump} D. Bump, \textit{Automorphic forms and representations,} Cambridge Studies in Advanced Mathematics, {\textbf{55}}.  Cambridge University Press, Cambridge, 1997.
	
	\bibitem{Co1}
	H. Cohen, \emph{$q$-identities for Maass waveforms}, Invent. Math. {\bf 91} (1988), no. 3, 409--422.
	
	\bibitem{Hi-Lo1}
	K. Hikami and J. Lovejoy, \emph{Torus knots and quantum modular forms}, Res. Math. Sci. {\bf 2}:2, (2015).
	
	\bibitem{Iwaniec02} H. Iwaniec, \textit{Spectral methods of automorphic forms. Second edition. Graduate Studies in Mathematics,} 53. American Mathematical Society, Providence, RI, 2002.
	
	\bibitem{KRW} M. Krauel, L. Rolen, and M. Woodbury, {\it On a relation between certain $q$-hypergeometric series and Maass waveforms}, submitted.
	
	\bibitem{LewisZagier1} J. Lewis and D. Zagier: {\it Period functions and the Selberg zeta function for the modular group}, ``The Mathematical Beauty of Physics, A Memorial Volume for Claude Itzykson'' (J.M.
	Drouffe and J.B. Zuber, eds.), Adv. Series in Mathematical Physics {\bf 24}, World Scientific, Singapore, 83--97 (1997).
	
	\bibitem{LewisZagier2} J. Lewis, D. Zagier: {\it Period functions for Maass wave forms. I}, Ann. Math. {\bf 153}, 191--258 (2001).
	
	\bibitem{RobMaass} Y. Li, H. Ngo, and R. Rhoades, {\it Renormalization and quantum modular forms, part I}, submitted.
	
	\bibitem{Lo1}
	J. Lovejoy, Bailey pairs and indefinite quadratic forms, \emph{J. Math. Anal. Appl.} {\bf 410} (2014), 1002--1013.
	\bibitem{MS} S. D. Miller and W. Schmid, \emph{Automorphic distributions, $L$-functions, and Voronoi summation for $\operatorname{GL}(3)$}, Ann. Math. {\bf 164} (2006), 423--488.
	
	\bibitem{Wa1}
	S.O. Warnaar, \emph{The Andrews-Gordon identities and $q$-multinomial coefficients}, Comm. Math. Phys. {\bf 184} (1997), 203--232.
	
	\bibitem{war}
	S.O. Warnaar, \emph{50 years of Bailey's lemma}, Algebraic combinatorics and applications (G{\"o}{\ss}weinstein, 1999), 333--347, Springer, Berlin, 2001.
	
	\bibitem{Wa2}
	S.O. Warnaar, \emph{Partial-sum analogues of the Rogers-Ramanujan identities}, J. Combin. Theory Ser. A {\bf 99} (2002), 143--161.
	
	\bibitem{Za1}
	D. Zagier, \emph{Quantum modular forms}, in: Quanta of maths, 659--675, Clay Math. Proc. {\bf 11}, Amer. Math. Soc., Providence, RI, 2010.
	
	\bibitem{ZwegersMockMaass} S. Zwegers, \begin{it} Mock Maass theta functions,\end{it}  Q. J. Math. \textbf{63} (2012), 753--770.
	
\end{thebibliography}
\end{document}